\documentclass[12pt,a4paper]{amsart}

\usepackage{amsmath}
\usepackage{amsfonts}
\usepackage{amssymb}
\usepackage{graphicx}

\DeclareFontEncoding{FMS}{}{}
\DeclareFontSubstitution{FMS}{futm}{m}{n}
\DeclareFontEncoding{FMX}{}{}
\DeclareFontSubstitution{FMX}{futm}{m}{n}
\DeclareSymbolFont{fouriersymbols}{FMS}{futm}{m}{n}
\DeclareSymbolFont{fourierlargesymbols}{FMX}{futm}{m}{n}
\DeclareMathDelimiter{\VERT}{\mathord}{fouriersymbols}{152}{fourierlargesymbols}{147}

\usepackage{tikz}
\usetikzlibrary{matrix,arrows}
\usepackage{mathtools}

\newtheorem{theorem}{Theorem}[]
\newtheorem{proposition}{Proposition}[section]

\newtheorem{lemma}[proposition]{Lemma}
\theoremstyle{definition}

\newtheorem*{acknowledgements}{Acknowledgements}

\newcommand{\R}{\mathbb{R}} 
\newcommand{\C}{\mathbb{C}} 

\newcommand{\nn}{\nonumber} 



\DeclareMathOperator{\supp}{supp}

\author[]{Pedro Caro \and Kaloyan Marinov}
\title[]{Stability of inverse problems in an infinite Slab with partial data}
\date{}
\keywords{Inverse boundary problems; partial data; stability.}
\address{ICMAT - CSIC, Spain}
\email{pedro.caro@icmat.es}
\address{Department of Mathematics, University of Washington, USA}
\email{kmarinov@math.washington.edu}
\begin{document}

\begin{abstract}
In this paper, we study the stability of two inverse boundary value problems in an infinite slab with partial data. These problems have been studied by Li and Uhlmann in \cite{LU} for the case of the Schr\"odinger equation and by Krupchyk, Lassas and Uhlmann in \cite{KLU} for the case of the magnetic Scr\"odinger equation. Here we quantify the method of uniqueness proposed by Li and Uhlmann and prove a log-log stability estimate for the inverse problems associated to the Schr\"odinger equation. The boundary measurements considered in these problems are modelled by partial knowledge of the Dirichlet-to-Neumann map: in the first inverse problem, the corresponding Dirichlet and Neumann data are known on different boundary hyperplanes of the slab; in the second inverse problem, they are known on the same boundary hyperplane of the slab.
\end{abstract}

\maketitle


\section{Introduction}\label{sec: Intro}
This paper is devoted to the study of an inverse boundary value problem (IBVP) for the Schr\"odinger equation in an infinite slab. The problem consists of recovering the electric potential $q$ in the slab
\[ \Sigma := \{ x \in \R^3 : 0 < x_3 < L \},\]
from partial knowledge of the Dirichlet-to-Neumann map (DN map). Here, $L > 0$ is a constant, $x_3$ denotes the 3rd coordinate of $x$ and $q$ is compactly supported in
\[Q := \{ (x', x_3) \in \R^3: |x'| \leq R,\, 0 \leq x_3 \leq L \}\]
with $R > 0$ a constant. The DN map is roughly defined by
\[\Lambda_q : f \longmapsto \partial_\nu u|_{\partial \Sigma},\]
where $\partial \Sigma$ denotes the boundary of $\Sigma$, $\nu$ represents the outward-pointing unit normal vector along $\partial \Sigma$, $\partial_\nu = \nu \cdot \nabla$ and $u$ solves the problem\footnote{This problem is well-posed under certain conditions on $f$, $k$ and $q$ but, for the sake of simplicity, we omit details at this point.}
\begin{equation*}
\left\{ \begin{aligned}
			(-\Delta - k^2 + q)u &= 0 \,\text{in}\, \Sigma \\
			u|_{\partial \Sigma} &= f.
		\end{aligned} \right.
\end{equation*}
In \cite{LU}, Li and Uhlmann proved two uniqueness results for the potential $q$; each result assumes a different kind of partial knowledge of the DN map. In order to precisely describe these uniqueness results, we need to introduce some notation. The boundary of $\Sigma$ consists of the two hyperplanes
\[\Gamma_1 := \{ x \in \R^3 : x_3 = L \},\qquad \Gamma_2 := \{ x \in \R^3 : x_3 = 0 \}. \]
Choose $R' >0$ with $R < R'$, and set
\[ \Gamma_j^N := \{ x \in \Gamma_j : |x'| < R' \}, \quad j=1,2. \]
Let $\Gamma_1^D$ be a relatively open, precompact subset of $\Gamma_1$ such that
\[\overline{\Gamma_1^N} \subset \Gamma_1^D.\]
Let $q_1$ and $q_2$ be potentials from $L^\infty (\Sigma)$ such that both are (compactly) supported in $Q$, and let $\Lambda_{q_1}$ and $\Lambda_{q_2}$ denote their corresponding DN maps. Li and Uhlamann showed that if either
\[\Lambda_{q_1} f|_{\Gamma^N_1} = \Lambda_{q_2} f|_{\Gamma^N_1} \]
for all $f$ supported in $\overline{\Gamma^D_1}$, or
\[\Lambda_{q_1} f|_{\Gamma^N_2} = \Lambda_{q_2} f|_{\Gamma^N_2} \]
for all $f$ supported in $\overline{\Gamma^D_1}$, then
\[q_1 = q_2.\]
These results were extended by Krupchyk, Lassas and Uhlmann in \cite{KLU} to the case of the magnetic Scr\"odinger equation. In a slightly different situation (see \cite{Po}), Pohjola has been able to relax the assumptions on the region where the boundary data is measured.

In the last fifteen years, IBVPs with partial data have attracted a lot of attention and nowadays there is a fairly long list of publications studying such problems. In \cite{BU}, Bukhgeim and Uhlmann established, in dimension $n \geq 3$, uniqueness results for the IBVPs associated to the Schr\"odinger equation and the conductivity equation in the setting where the Dirichlet data is given on the whole boundary but the Neumann data is given only on (roughly speaking) half of the boundary. This result was improved by Kenig, Sj\"ostrand and Uhlmann in \cite{KSU}. Stability estimates for these problems have been established in \cite{HW} for the Bukhgeim and Uhlmann's result and in \cite{CDFR} and \cite{CDFR2} for the Kenig \textit{et al}'s result. It is important to point out that, so far, the best known stability for these problems is of log-log type. A partial reconstruction procedure was proposed by Nachman and Street in \cite{NS}. Other related results are \cite{DFKSU}, \cite{Ch}, \cite{Tz}, \cite{Ch2}, \cite{Ch3}, \cite{ST} and \cite{ChST}. Another important result with partial data is \cite{I}, where Isakov proved, in dimension $n = 3$, uniqueness for IBVPs associated  to the Schr\"odinger equation and the conductivity equation with partial data. In his paper, Isakov assumed the boundary of the domain to be partially flat or spherical and the measurements to be taken on the complement of the flat or spherical part. Wang and Heck proved in \cite{HW2} that Isakov's method provides the optimal stability for this inverse problem, that is, of log type (see \cite{M} in connection with the optimality issue). Related results are \cite{CaOS}, \cite{Ca11}, \cite{KLU} and \cite{L}. Other interesting results for IBVPs with partial data are \cite{AU}, \cite{IYU} \cite{GT}, \cite{KS}, \cite{AK}, \cite{BJ} and \cite{Fa}.

The basic tools to deal with this kind of partial-data IBVPs are integration by parts to obtain Alessandrini formulas and the construction of appropriate complex geometric optics (CGOs). In \cite{BU}, Bukhgeim and Uhlmann used a Carleman estimate with boundary terms to control the part of the boundary where no measurements were taken and then stated a type of Alessandrini formula. On the other hand, in \cite{I}, Isakov used a reflection argument across the flat part of the domain's boundary to construct CGOs vanishing on that flat part. In \cite{LU}, Li and Uhlmann took advantage of the geometry of the slab to combine the ideas from \cite{BU} and \cite{I} to prove their uniqueness results.

The main results in this paper are quantitative versions of Li and Uhlmann's results and will be stated in Section \ref{sec:statement of main result}. They consist of log-log-type stability estimates for the IBVPs under consideration. In order to explain the reason for the extra log in our estimate, we will now sketch the main points in our proof for the case where the Dirichlet and Neumann data are measured on different hyperplanes.

Let $q_1$ and $q_2$ denote two potentials with compact support in $Q$, and let $\Lambda^2_{q_1}$ and $\Lambda^2_{q_2}$ be defined by
\[\Lambda^2_{q_1} f = \Lambda_{q_1} f|_{\Gamma^N_2}, \qquad \Lambda^2_{q_2} f = \Lambda_{q_2} f|_{\Gamma^N_2},\]
for all $f$ supported in $\overline{\Gamma^D_1}$. The first step in our approach is to prove an integral estimate in which
\[\left| \int_\Sigma (q_1 - q_2) u_1 u_2 \, dx \right| \]
is bounded by $\| \Lambda_{q_1}^2 - \Lambda_{q_2}^2 \|_\ast$ plus some controllable terms, for a large enough set of functions $u_1$ and $u_2$ solving the equations $(-\Delta -k^2 +q_1)u_1 = 0$ and $(-\Delta -k^2 +q_2)u_2 = 0$ in a bounded domain $\Omega \subset \Sigma$ satisfying
\[\{ x \in \Sigma: |x'| \leq R\} \subset \Omega.\]
In order to obtain this estimate, we require $u_1$ to vanish along $\Gamma_2 \cap \partial \Omega$. The second step in our approach is to construct an appropriate family of solutions to extract information from the integral estimate. This will be a family of CGOs depending on a large parameter $\tau$. In order to ensure that $u_1$ meets the requisite condition $u_1|_{\Gamma_2 \cap \partial \Omega} = 0$, we will use Isakov's reflection argument from \cite{I}. The third step is to insert the CGOs into the integral estimate, which enables us to estimate (from above) the Fourier transform of $q_1 - q_2$ at frequencies from
\[\{ \xi = (\xi', \xi_3) \in \R^3: |\xi| < r,\, |\xi'| > 1 \}\]
in terms of $\| \Lambda_{q_2}^2 - \Lambda_{q_1}^2 \|_\ast$ and the parameter $\tau$. The forth step consists of extending the set of frequencies, at which the Fourier transform of $q_1 - q_2$ is controlled, to all of $\{ \xi \in \R^3 : |\xi| < r \}$ . To do so, we proceed as Liang did in \cite{L}: we use that the Fourier transform of $q_1 - q_2$ is analytic and a result from \cite{Ibook}. Thus, we are able to control all the low frequencies in a ball of arbitrary radius. Finally, we follow the ideas proposed by Alessandrini in \cite{A} to control first $\| q_1 - q_2 \|_{H^{-1}(\R^3)}$ and then $\| q_1 - q_2 \|_{L^\infty(\Sigma)}$.

The ingredients to achieve the first step are a Carleman estimate with boundary terms (proved and used in \cite{BU} by Bukhgeim and Uhlmann), a quantified unique continuation property from a proper boundary subset (due to Phung, see \cite{P}), and a Runge-type approximation argument (performed by Li and Uhlmann in \cite{LU}). Let us point out that, the unique continuation from a proper boundary subset produces the extra log in our estimate. Furthermore, in order to be able to complete the proof of our first step, which requires utilizing the Runge-type argument (density in $L^2$ sense), we need to introduce a new operator norm $\| \centerdot \|_\ast$ to establish the stability of the IBVPs under consideration. The more standard operator norm requires the Runge-type argument to hold in a stronger sense than the $L^2$ one but this does not seem to be possible. The fact of introducing $\| \centerdot \|_\ast$ to establish stability of this problem is one of the novelties of our approach in comparison to the previous literature on stability for IBVPs with partial data.

The analytic unique continuation used in the fourth step does not produce any extra log since we are not enlarging the size of frequencies, we are just extending to low frequencies. This situation is different from \cite{HW}, \cite{CDFR}, \cite{CDFR2} and \cite{CS}.

The approach used in the case where the Dirichlet and Neumann data are measured on the same hyperplane is quite similar to this one. In that case, we use CGOs to construct $u_1$ and $u_2$ in a such a way that both of them vanish on $\Gamma_2 \cap \partial \Omega$; as a consequence, no Carleman estimate is required, so the proof of the integal estimate turns out to be simpler. However, the rest of the argument requires a quantification of the Riemann-Lebesgue lemma (cf. the proof of Theorem 8.22(f) from \cite{F} for functions in $C_c^\infty(\R^n)$).

The paper is organized as follows. In Section \ref{sec:statement of main result}, we state the main results of this article. In Section \ref{sec:integralEst}, we prove the integral estimates for the two IBVPs under consideration. In Section \ref{sec:theorem1}, we prove the stability of the problem when the Dirichlet and Neumann data are measured on different hyperplanes. Section \ref{sec:theorem2} is dedicated to the case where measurements are made on the same hyperplane.

\section{Main results}\label{sec:statement of main result}
In this section, we state the stability estimates that we announced in the introduction. In order to be precise, we will review some points from Section \ref{sec: Intro} with more details.

Let $K$ be an arbitrary compact subset of $\Gamma_1$, and define
\[H^{3/2}_K(\Gamma_1) := \{ f\in H^{3/2}(\Gamma_1) : \supp f \subseteq K \}.\]
Fix a potential $q \in L^\infty (\Sigma)$ which is compactly supported in $Q$. For a certain frequency $k \geq 0$ that we call admissible for $q$, we know that, given a compactly supported $w \in L^2(\Sigma)$, there exists a unique $v \in H^2_\mathrm{loc} (\overline{\Sigma})$ such that
\begin{equation}
\left\{ \begin{array}{r c l}
(-\Delta - k^2 + q) v &=& w \text{ in } \Sigma, \\
v|_{\partial \Sigma} &=& 0.
\end {array} \right. \label{pb:homoDIrichlet_q}
\end{equation}
Moreover, for any bounded subset $\Omega \subset \Sigma$, we have the estimate
\[\| v \|_{H^2(\Omega)} \leq C \| w \|_{L^2(\Sigma)},\]
where the constant $C > 0$ depends on $k, \Omega$, and any upper bound on $\| q \|_{L^\infty(\Sigma)}$. For an account of this direct problem and a discussion of admissible frequencies, see 
\cite{KLU}. The estimate bounding $v$ in $\Omega$ was not stated in \cite{KLU} but follows from their considerations.

The well-posedness of boundary value problem \eqref{pb:homoDIrichlet_q} implies that, given any $f \in H^{3/2}_K (\Gamma_1)$, there exists a unique admissible solution $u \in H^2_\mathrm{loc} (\overline{\Sigma})$ to the following Dirichlet problem
\begin{equation}
\left\{ \begin{array}{llll}
( -\Delta - k^2 + q) &u &= &0 \text{ in } \Sigma,  \\
\ &u|_{\Gamma_1} &= &f,  \\
\ &u|_{\Gamma_2} &= &0. 
\end{array} \right. \label{pb:Dirichlet_q}
\end{equation}
The well-posedness of this problem allows us to define the following DN map
\begin{eqnarray}
\Lambda_q: H^{3/2}_K(\Gamma_1) &\rightarrow& H^{1/2}_{\text{loc}}(\partial \Sigma). \nn \\
f &\mapsto& \partial_\nu u|_{\partial \Sigma} \nn
\end{eqnarray}
where $u$ is the unique admissible solution to the problem \eqref{pb:Dirichlet_q}. Let $\Lambda^1_q$ and $\Lambda^2_q$ denote the maps defined by
\begin{equation*}
\Lambda^1_q f := \Lambda_q f|_{\Gamma_1^N}, \quad \Lambda^2_q f:= \Lambda_q f|_{\Gamma_2^N}, \qquad \forall\, f \in H^{3/2}_{\overline{\Gamma^D_1}}(\Gamma_1).
\end{equation*}
Now we are ready to state the main results of this paper.
\begin{theorem} \label{th:op-norm} \sl
Let $k \geq 0$ be an admissible frequency for the zero potential. Then, there exists a norm $\VERT \centerdot \VERT$ on $H^{3/2}_{\overline{\Gamma^D_1}}(\Gamma_1)$, which depends on $k$, such that, if $q \in L^\infty (\Sigma)$ with $\supp q \subseteq Q$ and if $k$ is admissible for $q$, then $\Lambda^l_q$ is a bounded operator from $\left( H^{3/2}_{\overline{\Gamma^D_1}}(\Gamma_1), \VERT \centerdot \VERT \right)$ to $H^{-3/2}(\Gamma^N_l)$.
\end{theorem}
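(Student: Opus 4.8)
The plan is to build $\VERT\centerdot\VERT$ out of the zero-potential Dirichlet problem, which is available precisely because $k$ is admissible for $q\equiv 0$. Fix once and for all a bounded cylinder $\Omega:=\{x\in\Sigma:|x'|<R''\}$ with $R''>R'$; one may in addition take $R''$ so small that the flat face $\{x_3=L,\,|x'|\le R''\}$ is contained in $\Gamma_1^D$ (possible since $\Gamma_1^D$ is relatively open and contains $\overline{\Gamma_1^N}$), which makes $\VERT\centerdot\VERT$ strictly weaker than the $H^{3/2}$ norm — useful in later sections, but not needed here. Note that then $\{x\in\Sigma:|x'|\le R\}\subset\Omega$ and that each $\overline{\Gamma_j^N}$ sits, at positive distance from the edge of $\Omega$ and from the opposite face, inside the flat piece of $\partial\Omega$ lying in $\Gamma_j$. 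Given $f\in H^{3/2}_{\overline{\Gamma^D_1}}(\Gamma_1)$, let $u_0\in H^2_{\mathrm{loc}}(\overline\Sigma)$ be the admissible solution of \eqref{pb:Dirichlet_q} with $q\equiv 0$ and Dirichlet datum $f$, and set
\[\VERT f\VERT:=\|u_0\|_{L^2(\Omega)}+\|\partial_\nu u_0\|_{H^{1/2}(\Gamma^N_1)}.\]
This depends only on $k$ and on the (fixed) geometry, not on $q$.

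First I would check that $\VERT\centerdot\VERT$ is a norm. Finiteness follows from the well-posedness recalled above: extending $f$ to a compactly supported $F\in H^2(\Sigma)$ with $\supp F$ near $\Gamma_1$ and $\|F\|_{H^2(\Sigma)}\le C\|f\|_{H^{3/2}}$, and writing $u_0=F+\tilde u_0$ where $\tilde u_0$ solves \eqref{pb:homoDIrichlet_q} (with $q\equiv 0$) with right-hand side $-(-\Delta-k^2)F$, the quoted $H^2(\Omega)$ estimate gives $\|u_0\|_{H^2(\Omega)}\le C\|f\|_{H^{3/2}}<\infty$, and the trace theorem on the flat face of $\partial\Omega$ in $\Gamma_1$ then bounds $\|\partial_\nu u_0\|_{H^{1/2}(\Gamma^N_1)}$ as well. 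Homogeneity and the triangle inequality are immediate from the linearity of $f\mapsto u_0$. For positive-definiteness, if $\VERT f\VERT=0$ then $u_0$ vanishes on the nonempty open set $\Omega\subset\Sigma$; since $u_0$ solves $(-\Delta-k^2)u_0=0$ in the connected set $\Sigma$, unique continuation forces $u_0\equiv0$ on $\Sigma$, hence $f=u_0|_{\Gamma_1}=0$.

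Next, fix $q\in L^\infty(\Sigma)$ with $\supp q\subseteq Q$ and $k$ admissible for $q$, and let $u$ solve \eqref{pb:Dirichlet_q} with datum $f$. I would split $u=u_0+u_r$, where $u_r$ solves \eqref{pb:homoDIrichlet_q} with right-hand side $-qu_0$; since $\supp(qu_0)\subseteq Q$, the quoted a priori estimate gives
\[\|u_r\|_{H^2(\Omega)}\le C\,\|qu_0\|_{L^2(\Sigma)}\le C\,\|q\|_{L^\infty}\,\|u_0\|_{L^2(\Omega)},\]
with $C$ depending on $k$, $\Omega$ and $\|q\|_{L^\infty}$. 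Hence $\Lambda^l_q f=\partial_\nu u|_{\Gamma^N_l}=\partial_\nu u_0|_{\Gamma^N_l}+\partial_\nu u_r|_{\Gamma^N_l}$. Because $\overline{\Gamma^N_l}$ lies in a flat face of $\partial\Omega$ away from the edge, a cutoff localized near $\overline{\Gamma^N_l}$ together with the $H^1\to H^{1/2}$ trace estimate yields $\|\partial_\nu u_r\|_{H^{1/2}(\Gamma^N_l)}\le C\|u_r\|_{H^2(\Omega)}\le C\|q\|_{L^\infty}\VERT f\VERT$. For $l=1$ the remaining term $\partial_\nu u_0|_{\Gamma^N_1}$ is controlled by $\VERT f\VERT$ by definition. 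For $l=2$, $u_0$ satisfies the \emph{homogeneous} Dirichlet condition on $\Gamma_2$ and $(-\Delta-k^2)u_0=0$, so up-to-the-boundary elliptic regularity on a neighbourhood $V\Subset\Omega$ of $\overline{\Gamma^N_2}$ with $V\cap\partial\Sigma\subset\Gamma_2$ gives $\|u_0\|_{H^2(V')}\le C\|u_0\|_{L^2(V)}$ for $V'\Subset V$, whence $\|\partial_\nu u_0\|_{H^{1/2}(\Gamma^N_2)}\le C\|u_0\|_{L^2(\Omega)}\le C\VERT f\VERT$. Adding the contributions, $\|\Lambda^l_q f\|_{H^{1/2}(\Gamma^N_l)}\le C(1+\|q\|_{L^\infty})\VERT f\VERT$; since $H^{1/2}(\Gamma^N_l)\hookrightarrow H^{-3/2}(\Gamma^N_l)$ continuously, $\Lambda^l_q$ is bounded from $\bigl(H^{3/2}_{\overline{\Gamma^D_1}}(\Gamma_1),\VERT\centerdot\VERT\bigr)$ to $H^{-3/2}(\Gamma^N_l)$.

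The main obstacle is the bookkeeping of the trace and elliptic-regularity estimates near $\Gamma^N_1$ and $\Gamma^N_2$: $\Omega$ is only Lipschitz, having an edge where $\{|x'|=R''\}$ meets $\Gamma_1\cup\Gamma_2$, so all such estimates must be localized by cutoffs supported in a neighbourhood of $\overline{\Gamma^N_l}$ that avoids the edge, where the boundary is flat and the standard up-to-the-boundary $H^2$ regularity (with homogeneous Dirichlet data, for $u_0$ near $\Gamma_2$) and the $H^1\to H^{1/2}$ trace estimate apply directly. One should also confirm that the constant in the quoted well-posedness estimate really depends only on $k$, $\Omega$ and an upper bound for $\|q\|_{L^\infty}$, so that $\VERT\centerdot\VERT$ is genuinely $q$-independent and only the operator bound deteriorates with $\|q\|_{L^\infty}$. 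The remaining points — the splitting $u=u_0+u_r$, finiteness of $\VERT\centerdot\VERT$, and its positive-definiteness via unique continuation — are routine.
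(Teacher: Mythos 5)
Your construction does prove the literal existence statement, but it is not the paper's argument, and the difference is consequential. The paper (Lemma \ref{lem:op-norm}) takes $\VERT f \VERT := \| v_f \|_{L^2(\Omega)}$ alone, as in \eqref{def:tripleNORM}, and obtains the bound for both $l=1,2$ by a duality trick: for a test function $g \in H^{3/2}_{\overline{\Gamma^N_l}}(\Gamma_l)$ one chooses an extension $v \in H^2(\Omega)$ with $v = g$ on $\Gamma^N_l$, $v = 0$ on the rest of $\partial\Omega$ and $\partial_\eta v|_{\partial\Omega} = 0$, so that Green's identity gives $\int_{\Gamma_l} \Lambda_q f\, g\, dx' = \int_\Omega (\Delta u\, v - u \Delta v)\, dx$; since $\Delta u = (q - k^2) u$ in $\Omega$, the dual norm \eqref{def:H-3/2} is bounded by $\| u \|_{L^2(\Omega)}$, and the same splitting you use (namely $u = v_f + w$ with $w$ solving the $q$-problem with source $-q v_f$) finishes the proof. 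Because the extension has vanishing normal derivative, the nonzero Dirichlet trace of $u$ on $\Gamma_1$ never enters, and no Neumann data of $v_f$ is needed. You instead estimate $\partial_\nu u$ in $H^{1/2}(\Gamma^N_l)$ directly: this works on $\Gamma^N_2$ (zero Dirichlet data plus local boundary regularity), but on $\Gamma^N_1$ it cannot be reduced to $\| u_0 \|_{L^2(\Omega)}$, which is exactly why you add the term $\| \partial_\nu u_0 \|_{H^{1/2}(\Gamma^N_1)}$ to the norm. (Your final embedding step is harmless given the duality definition \eqref{def:H-3/2}, and the localization of trace and regularity estimates away from the edge of your Lipschitz cylinder is routine; the paper sidesteps it by taking $\Omega$ smooth.)

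The added Neumann term is where your route parts company with the paper in a way that matters. Theorem \ref{th:op-norm} exists only to define the operator norm $\| \centerdot \|_\ast$ used in Theorems \ref{th:diffDdNd} and \ref{th:sameDdNd}, and the later argument (estimate \eqref{es:OPboundedness} followed by the Runge-type approximation, which is only an $L^2(\Omega)$ density statement) requires $\VERT f \VERT$ to be controlled by $\| v_f \|_{L^2(\Omega)}$ alone: with your norm one would in addition have to control $\| \partial_\nu v_f \|_{H^{1/2}(\Gamma^N_1)}$ along the approximating solutions $v_1$, which $L^2(\Omega)$-closeness does not provide; this is precisely the obstruction the authors cite in the introduction as the reason for introducing the nonstandard norm. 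So, as a standalone proof of the stated existence claim, your argument is correct, but the strictly stronger norm you build would not serve the purpose for which the theorem is stated, whereas the paper's Green's-identity argument shows the weaker norm $\| v_f \|_{L^2(\Omega)}$ already suffices for both $\Lambda^1_q$ and $\Lambda^2_q$.
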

Let $\| \centerdot \|_\ast$ denote the operator norm of bounded linear operators from $\left( H^{3/2}_{\overline{\Gamma^D_1}}(\Gamma_1), \VERT \centerdot \VERT \right)$ to $H^{-3/2}(\Gamma^N_l)$.

\begin{theorem} \label{th:diffDdNd} \sl Consider $s > 3/2$, and let $q_1$, $q_2$ belong to $H^s (\Sigma)$ and have their supports contained in $Q$. Consider $k \geq 0$ to be admissible for $q_1$, $ q_2$ and the zero potential. Let $M$ denote an upper bound on $\| q_j \|_{H^s(\Sigma)} \leq M$. Then, there exists $\delta = \delta(L, R, k) > 0$ such that, if $\| \Lambda^2_{q_2} - \Lambda^2_{q_1} \|_\ast < 1/\delta$, then
\[\| q_1 - q_2 \|_{L^\infty (\Sigma)} \lesssim \big( \log [ 1 + | \log (\delta \| \Lambda^2_{q_2} - \Lambda^2_{q_1} \|_\ast) | ] \big)^{-\theta \frac{s - 3/2}{s + 1}} \]
with $0 < \theta < 1/10$.
The implicit constant\footnote{Throughout the paper, we will write $a\lesssim b$ whenever $a$ and $b$ are non-negative quantities that satisfy $a \leq C b$ for a certain constant $C > 0$. A constant $C > 0$ satisfying the previous inequality will be called an \textit{implicit constant} and it will only depend on unimportant quantities such as $L, R, k, M, s$ and $\delta$.} only depends on $L, R, k, M, s$ and $\delta$.
\end{theorem}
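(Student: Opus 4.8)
The plan is to make Li and Uhlmann's uniqueness argument quantitative, tracking how each ingredient contributes to the final rate. Write $\epsilon := \| \Lambda^2_{q_2} - \Lambda^2_{q_1} \|_\ast$ and fix a bounded smooth domain $\Omega \subset \Sigma$ with $\{ x \in \Sigma : |x'| \le R \} \subset \Omega$ and with $\Gamma_2 \cap \partial\Omega$ a nonempty relatively open subset of $\Gamma_2$, as in Section~\ref{sec:integralEst}. The argument has five steps: (i) an integral estimate; (ii) the construction of complex geometric optics (CGO) solutions, one of which vanishes on $\Gamma_2 \cap \partial\Omega$; (iii) insertion of the CGOs to bound the Fourier transform $\widehat{q_1 - q_2}$ on a cone of frequencies; (iv) analytic continuation to bound $\widehat{q_1 - q_2}$ on a full ball of arbitrary radius $r$; (v) an Alessandrini-type low/high frequency splitting followed by Sobolev interpolation. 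The main difficulty is concentrated in step (i): it requires running the Carleman/Runge machinery quantitatively even though the Runge approximation is available only in the $L^2$ topology — this is exactly why the nonstandard norm $\VERT \centerdot \VERT$ and the associated $\| \centerdot \|_\ast$ must be used — and then feeding the outcome through a quantitative unique continuation without losing more than one extra logarithm; steps (ii)--(v) form a careful but essentially standard CGO-plus-optimization argument.

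\emph{Step (i).} For admissible solutions $u_1, u_2$ of $(-\Delta - k^2 + q_1) u_1 = 0$ and $(-\Delta - k^2 + q_2) u_2 = 0$ in $\Omega$ with $u_1|_{\Gamma_2 \cap \partial\Omega} = 0$, I would prove, as the content of Section~\ref{sec:integralEst},
\[ \Big| \int_\Sigma (q_1 - q_2)\, u_1 u_2 \, dx \Big| \lesssim \big( \log(1/\epsilon) \big)^{-\mu}\, \| u_1 \|_{H^1(\Omega)} \| u_2 \|_{H^1(\Omega)} \]
for some $\mu > 0$ depending only on the geometry, valid once $\epsilon$ is small (the auxiliary parameter measuring the precision of the Runge approximation having been optimized away). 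The three ingredients are: the Bukhgeim--Uhlmann Carleman estimate with boundary terms (\cite{BU}), used to absorb the Neumann trace of $u_2$ on the part of $\partial\Omega$ carrying no measurements; the Li--Uhlmann Runge-type approximation (\cite{LU}), which replaces $u_1, u_2$ by admissible solutions on the whole slab and is available only in the $L^2$ sense; and Phung's quantitative unique continuation from a boundary subset (\cite{P}), which degrades what would otherwise be an $O(\epsilon)$ bound into an $O\big( (\log(1/\epsilon))^{-\mu} \big)$ one and thereby injects the first logarithm.

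\emph{Steps (ii)--(iv).} Given $\xi = (\xi', \xi_3) \in \R^3$ with $0 < |\xi| < r$ and $|\xi'| > 1$, I would insert into the integral estimate CGOs $u_j = e^{\varphi_j}(1 + \rho_j)$, $j = 1, 2$, with linear phases, $\mathrm{Re}\, \varphi_j$ of size $\tau$, arranged so that $u_1 u_2 = e^{-i x \cdot \xi}(1 + \rho)$ with $\| \rho \|_{H^1(\Omega)} \lesssim \tau^{-1}$; to guarantee $u_1|_{\Gamma_2 \cap \partial\Omega} = 0$ I would use Isakov's reflection argument (\cite{I}), reflecting across $\{ x_3 = 0 \}$ and taking $u_1$ to be an odd CGO (the restriction $|\xi'| > 1$ arises from this construction). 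Since $\| u_j \|_{H^1(\Omega)} \lesssim e^{C\tau}$, step (i) yields
\[ | \widehat{q_1 - q_2}(\xi) | \lesssim e^{C\tau} \big( \log(1/\epsilon) \big)^{-\mu} + \frac{1}{\tau}, \qquad 0 < |\xi| < r,\ |\xi'| > 1. \]
Since $q_1 - q_2$ is supported in $Q$, $\widehat{q_1 - q_2}$ is entire of exponential type and bounded by $\lesssim M$; hence, as in \cite{L} (via the result quoted from \cite{Ibook}), this bound propagates from the cone $\{ |\xi| < r,\ |\xi'| > 1 \}$ to the full ball $\{ |\xi| < r \}$ at the cost of raising it to a power $\gamma \in (0, 1)$ depending only on $R$. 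No additional logarithm is lost here, since $r$ is not enlarged.

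\emph{Step (v).} Following \cite{A}, split
\[ \| q_1 - q_2 \|_{H^{-1}(\R^3)}^2 = \int_{|\xi| < r} \frac{| \widehat{q_1 - q_2}(\xi) |^2}{1 + |\xi|^2}\, d\xi + \int_{|\xi| \ge r} \frac{| \widehat{q_1 - q_2}(\xi) |^2}{1 + |\xi|^2}\, d\xi . \]
The tail is $\lesssim r^{-2(s+1)} M^2$ by the $H^s$ bound on $q_1 - q_2$, while the low-frequency part is controlled by step (iv). Optimizing first in $\tau$ — balancing $e^{C\tau}(\log(1/\epsilon))^{-\mu}$ against $\tau^{-1}$, which is precisely where the second logarithm $\log[ 1 + | \log(\delta\epsilon) | ]$ appears — and then in $r$, and requiring $\epsilon < 1/\delta$ with $\delta = \delta(L, R, k)$ large enough, gives $\| q_1 - q_2 \|_{H^{-1}(\R^3)} \lesssim \big( \log[ 1 + | \log(\delta\epsilon) | ] \big)^{-\nu}$ for some $\nu > 0$. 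Then, for any $t > 3/2$, the interpolation inequality $\| q_1 - q_2 \|_{H^t} \lesssim \| q_1 - q_2 \|_{H^{-1}}^{\frac{s - t}{s+1}} \| q_1 - q_2 \|_{H^s}^{\frac{t+1}{s+1}}$ together with the embedding $H^t(\R^3) \hookrightarrow L^\infty(\R^3)$, the bound $\| q_j \|_{H^s} \le M$, and the limit $t \downarrow 3/2$ yield
\[ \| q_1 - q_2 \|_{L^\infty(\Sigma)} \lesssim \big( \log[ 1 + | \log(\delta \| \Lambda^2_{q_2} - \Lambda^2_{q_1} \|_\ast) | ] \big)^{-\theta \frac{s - 3/2}{s+1}}, \]
where $\theta > 0$ collects $\nu$ together with the Hölder exponents accumulated above and, by weakening the estimate if necessary, may be taken with $0 < \theta < 1/10$.
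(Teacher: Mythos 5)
Your steps (ii)--(v) mirror the paper's argument (Isakov reflection to force $u_1|_{\Gamma_2\cap\partial\Omega}=0$ and the restriction $|\xi'|\ge 1$, analytic continuation as in Liang via Isakov's book with no extra log, Alessandrini splitting plus interpolation and Sobolev embedding), but there is a genuine gap at your step (i), which you yourself single out as the heart of the proof. The estimate you claim there,
\[
\Big|\int_\Sigma (q_1-q_2)\,u_1u_2\,dx\Big|\ \lesssim\ \big(\log(1/\epsilon)\big)^{-\mu}\,\|u_1\|_{H^1(\Omega)}\|u_2\|_{H^1(\Omega)},
\]
asserted for \emph{all} solutions with $u_1|_{\Gamma_2\cap\partial\Omega}=0$, does not follow from the Carleman/Runge/Phung ingredients, and the method cannot deliver it in this $\tau$-free, unweighted form. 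In this configuration the Neumann data are measured on $\Gamma_2^N$ only, so the boundary term $\int_{\Gamma_1^N}\chi u_2\,\partial_\nu w\,dx'$ can only be handled by the Bukhgeim--Uhlmann Carleman estimate; that step unavoidably introduces the weights $e^{\pm\tau x\cdot\zeta}$ (with $\zeta\cdot e_3\ge 1$) and leaves, alongside terms carrying $e^{c\tau|\zeta|}\big(\log(1/\epsilon)\big)^{-1/2}$ and $e^{c\tau|\zeta|}\epsilon^{1/4}$, a term of the form $\tau^{-1/2}\|e^{\tau x\cdot\zeta}u_2\|_{H^1(\Omega)}\|e^{-\tau x\cdot\zeta}u_1\|_{L^2(\Omega)}$. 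For arbitrary solutions the weighted norms are only controlled by $e^{c\tau|\zeta|}$ times the unweighted ones, so "optimizing $\tau$ away" inside step (i) fails: taking $\tau=\tau_0$ gives a bound of size $\|u_1\|\,\|u_2\|$ with no decay in $\epsilon$ at all, while taking $\tau$ large blows up the exponentials. One also cannot replace the Carleman step by quantitative unique continuation of $w$ from the measured hyperplane, because $w$ solves $(-\Delta-k^2+q_2)w=(q_1-q_2)v_1$ with a source that is not small and sits between $\Gamma_2^N$ and $\Gamma_1^N$.

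This is precisely why Proposition \ref{prop:integralEst}(b) keeps the parameter $\tau$, the direction $\zeta$ and the weighted norms explicit, and why in Section \ref{sec:theorem1} the decay is recovered only after inserting the CGOs with the matched choice $\zeta=\xi^\perp$, which makes $e^{-\tau x\cdot\xi^\perp}u_1$ and $e^{\tau x\cdot\xi^\perp}u_2$ controlled (the reflected piece of $u_1$ even decays since $\xi_{1e}\ge 1$ and $x_3\ge 0$); only then is $\tau$ balanced against the logarithm and against $r$. Since your later steps apply step (i) only to these CGOs, the proposal is repairable: state the integral estimate in the weighted, $\tau$-dependent form and add the phase/weight matching $\zeta=\xi^\perp$; as written, though, the central intermediate claim is unjustified. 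Minor further points: the trace argument requires $\|u_2\|_{H^2(\Omega)}$ rather than $H^1$; the exponential in your frequency bound should be $e^{C\tau r}$, not $e^{C\tau}$; and in step (v) one should fix a concrete $t$, e.g.\ $t=3/2+(s-3/2)/2$, rather than let $t\downarrow 3/2$, since the constant in $H^t\hookrightarrow L^\infty$ blows up in that limit (the paper's choice simply folds the resulting factor of $1/2$ into $\theta$).
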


\begin{theorem} \label{th:sameDdNd} \sl
Consider $s > 3/2$, and let $q_1, q_2$ belong to $H^s (\R^3)$ and have their supports contained in $Q$. Consider $k$ to be admissible for $q_1$, $ q_2$ and the zero potential. Let $M$ denote an upper bound on $\| q_j \|_{H^s(\R^3)} \leq M$. Then, there exists $\delta = \delta(L, R, k)> 0$ such that, if $\| \Lambda^1_{q_2} - \Lambda^1_{q_1} \|_\ast < 1/\delta$, then
\[\| q_1 - q_2 \|_{L^\infty (\Sigma)} \lesssim \big( \log [ 1 + | \log (\delta \| \Lambda^1_{q_2} - \Lambda^1_{q_1} \|_\ast) | ] \big)^{- \theta \frac{s - 3/2}{s + 1}}. \]
with $0 < \theta < 1/5$.
The implicit constant in the last inequality depends on the same parameters as the implicit constant from the inequality in Theorem \ref{th:diffDdNd}.
\end{theorem}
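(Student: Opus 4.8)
The plan is to mirror the strategy outlined in the introduction for Theorem \ref{th:diffDdNd}, but now exploiting the fact that the Dirichlet and Neumann data sit on the \emph{same} hyperplane $\Gamma_1$, which makes the integral estimate cleaner. First I would fix a bounded domain $\Omega \subset \Sigma$ with $\{x\in\Sigma:|x'|\le R\}\subset\Omega$ and derive an Alessandrini-type integral identity: for admissible solutions $u_1,u_2$ of $(-\Delta-k^2+q_j)u_j=0$ in $\Omega$, one has
\[
\int_\Sigma (q_1-q_2)\,u_1 u_2\,\dd x
= \int_{\Gamma_1^N}(\Lambda_{q_1}^1-\Lambda_{q_2}^1)f_1\,\overline{f_2}\,\dd S + (\text{controllable terms}),
\]
where the ``controllable terms'' involve boundary contributions from $\partial\Omega\setminus\Gamma_1$. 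The key structural simplification here — as the authors note — is that if both $u_1$ and $u_2$ are built so as to vanish on $\Gamma_2\cap\partial\Omega$, then no Carleman estimate with boundary terms is needed, and one only has to control the traces on the lateral part $\{|x'|=R'\}\cap\overline\Omega$ by a Runge-type density argument together with the well-posedness estimate $\|v\|_{H^2(\Omega)}\lesssim\|w\|_{L^2(\Sigma)}$. The norm $\VERT\centerdot\VERT$ and the dual norm $\|\centerdot\|_\ast$ from Theorem \ref{th:op-norm} are precisely what make this Runge argument close in the $L^2$ sense, so the left-hand side ends up bounded by $\|\Lambda^1_{q_1}-\Lambda^1_{q_2}\|_\ast$ times a polynomial in a large parameter $\tau$.

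Next I would insert complex geometric optics solutions. Write $q=q_1-q_2$ (extended by zero to $\R^3$, which is legitimate since $\supp q_j\subseteq Q$ and here $q_j\in H^s(\R^3)$). Using Isakov's reflection across $\Gamma_2=\{x_3=0\}$, construct CGOs $u_j = e^{\zeta_j\cdot x}(1+\psi_j)$ with $\zeta_1+\zeta_2 = i\xi$ for a target frequency $\xi$ with $|\xi|<r$, arranged so that $u_1,u_2$ vanish on $\Gamma_2$; the reflection forces the admissible frequencies $\xi$ to have a nonvanishing transverse component, which is why in the first step of the scheme only $\{|\xi|<r,\ |\xi'|>1\}$ is directly reachable. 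Plugging these into the integral identity and estimating the remainders $\psi_j$ in $L^2(\Omega)$ by $O(1/\tau)$ yields a bound of the form
\[
|\widehat q(\xi)| \lesssim \tau^{N}\,\|\Lambda^1_{q_2}-\Lambda^1_{q_1}\|_\ast + \frac{C(r)}{\tau}, \qquad |\xi|<r,\ |\xi'|>1,
\]
for suitable $N$ and $C(r)$ growing with $r$. The extra step special to this theorem (compared with the $\Gamma_2$-measurement case) is the quantitative Riemann–Lebesgue estimate alluded to after the introduction: one needs that the reflected/undesired CGO term contributes something that decays in $\tau$ with an explicit rate, which for $q_j\in C_c^\infty$ is classical but here must be made quantitative for $H^s$ data, costing a factor tied to $s$ and hence the exponent $(s-3/2)/(s+1)$.

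Then I would extend the frequency control to the full ball $\{|\xi|<r\}$ by Liang's analyticity argument from \cite{L}: $\widehat q$ is entire of exponential type (support in $Q$), it is already controlled on the open set $\{|\xi|<r,\ |\xi'|>1\}$, and an analytic-continuation estimate from \cite{Ibook} propagates the bound to all $|\xi|<r$ without enlarging frequencies, hence without an extra logarithm. Optimizing in $\tau$ (balancing $\tau^N\|\cdot\|_\ast$ against $1/\tau$) and then in $r$ gives an intermediate bound of the form $|\widehat q(\xi)|\lesssim (\log|\log(\delta\|\cdot\|_\ast)|)^{-\sigma}$ on a ball whose radius may itself be chosen growing slowly. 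Finally, following Alessandrini \cite{A}, split $\|q\|_{H^{-1}(\R^3)}^2$ into $|\xi|<r$ and $|\xi|\ge r$, using the low-frequency bound on the first piece and $\|q\|_{H^s}\le M$ with $s>3/2$ on the tail to interpolate; converting $H^{-1}$ control into $L^\infty$ control via Sobolev embedding and the a priori $H^s$ bound produces the stated exponent $-\theta\frac{s-3/2}{s+1}$ with $0<\theta<1/5$ (the better constant than Theorem \ref{th:diffDdNd} coming from the absence of the Carleman/Phung unique-continuation loss).

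The main obstacle I expect is the second step: making the reflection-CGO construction compatible with the homogeneous condition $u_j|_{\Gamma_2}=0$ \emph{and} simultaneously obtaining the quantitative decay (uniform in the large parameter) of the spurious term produced by the reflection, since the reflected potential is only $H^s$ rather than smooth, so the Riemann–Lebesgue-type decay rate must be tracked carefully through the remainder estimates and is what ultimately dictates the admissible range of $\theta$.
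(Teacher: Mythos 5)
Your overall skeleton (an integral estimate that needs no Carleman inequality because both solutions vanish on $\Gamma_2\cap\partial\Omega$, reflected CGOs, analytic continuation to the small-$|\xi'|$ region, then Alessandrini's argument) is the same as the paper's. The genuine gap is in the central CGO step: you claim the construction yields a bound on $|\widehat{q_1-q_2}(\xi)|$ itself, with the reflection's spurious contribution ``decaying in $\tau$'' by a quantified Riemann--Lebesgue lemma. That is true only for the two \emph{mixed} cross terms, whose phases contain a component $\pm 2\alpha\xi_{1e}$ in the $x_3$-direction that grows with the large parameter. Since both $u_1$ and $u_2$ must be reflected differences $e^{x\cdot\rho_m}(1+\psi_m)-e^{x^*\cdot\rho_m}(1+\psi_m^*)$ (a single exponential cannot vanish on $\Gamma_2$), the product $u_1u_2$ also contains the doubly reflected term with phase $e^{ix^*\cdot\xi}$, and $\int_\Sigma (q_1-q_2)e^{ix^*\cdot\xi}\,dx$ is of the same size as the main term and does not decay in $\tau$ at all. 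Hence the construction controls only the symmetrized quantity, i.e.\ the Fourier transform of the even extensions $Q_j^{\mathrm{even}}$ of $q_j$ in $x_3$. The paper therefore runs the entire low-frequency extension and Alessandrini argument on $(Q_1^{\mathrm{even}}-Q_2^{\mathrm{even}})\widehat{\ }$ and recovers $\|q_1-q_2\|_{L^\infty(\Sigma)}$ only because $Q_j^{\mathrm{even}}=q_j$ on $\Sigma$; your proposal never makes this reduction, and the intermediate estimate as you state it cannot be derived from these CGOs.

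Two further inaccuracies. First, the intermediate bound does not have the form $\tau^N\|\Lambda^1_{q_2}-\Lambda^1_{q_1}\|_\ast+C(r)/\tau$: Proposition \ref{prop:integralEst}(a) gives $\|u_1\|_{L^2(\Omega)}\|u_2\|_{H^2(\Omega)}\bigl[1+|\log(\delta\|\Lambda^1_{q_2}-\Lambda^1_{q_1}\|_\ast)|\bigr]^{-1/2}$, where the inner factor comes from Phung's quantified unique continuation (which \emph{is} still used in case (a), to control $w$ on $Q'$), and the CGO norms grow like $e^{cr\tau}$; it is this exponential-versus-$\log|\log|$ balance, not a polynomial one, that forces $r$ to be a power of $\log[1+|\log(\delta\|\cdot\|_\ast)|]$ and yields the log-log rate. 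Consequently your parenthetical attributing the improved $\theta$ to the ``absence of the Carleman/Phung unique-continuation loss'' is half wrong: the Phung loss is present in both theorems; the gain over Theorem \ref{th:diffDdNd} comes only from the absence of the Carleman remainder ($1/\tau$ instead of $1/\tau^{1/2}$), which changes $\theta=\lambda/(2(\lambda+5))$ into $\theta=\lambda/(2\lambda+5)$. Second, the exponent $(s-3/2)/(s+1)$ arises from the Sobolev-embedding/interpolation step exactly as in Theorem \ref{th:diffDdNd}, not from quantifying Riemann--Lebesgue for $H^s$ data; the quantified Riemann--Lebesgue lemma only supplies the decay of the mixed cross terms.
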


Our results hold in dimension $n = 3$. We have only considered the three dimensional case for the sake of simplicity but we believe that these results also hold for $n > 3$ following similar arguments.

\section{Integral estimates}\label{sec:integralEst}
The main goal of this section is to prove the integral estimates that we announced in the introduction. Before stating these estimates, we will introduce a norm for $H^{3/2}_{\overline{\Gamma^D_1}}(\Gamma_1)$ and we will prove Theorem \ref{th:op-norm}.

Let $k\geq0$ be an admissible frequency for the zero potential in $\Sigma$; we define,  for each $f \in H^{3/2}_{\overline{\Gamma^D_1}} (\Gamma_1)$, the norm
\begin{equation}
\VERT f \VERT := \| v_f \|_{L^2(\Omega)}, \label{def:tripleNORM}
\end{equation}
where $v_f \in H^2_\mathrm{loc} (\overline{\Sigma}) $ is the unique solution to
\begin{eqnarray}
\left\{ \begin{array}{r c l}
-(\Delta + k^2) v_f &=& 0 \text{ in } \Sigma, \\
v_f|_{\Gamma_1} &=& f,  \\
v_f|_{\Gamma_2} &=& 0;
\end {array} \right. \label{pb:Dirichlet_0} 
\end{eqnarray}
and $\Omega$ is a bounded open subset of $\Sigma$ which satisfies
\[\{ x \in \Sigma : |x'| \leq R' \} \subset \Omega\]
and has a smooth boundary $\partial \Omega$ such that
\[\partial \Omega \cap \Gamma_1 \subseteq \Gamma_1^D, \qquad \overline{\Gamma_j^N} \subseteq \mathrm{int}_{\Gamma_j} (\partial \Omega \cap \Gamma_j) \]
for $j = 1, 2$. {Since we want $\Gamma_1^D$ and $\Gamma^N_j$ to be as small as possible, we now assume $R' < 2R$; at this moment, we fix $\Omega$ satisfying all of the above conditions together with}
\[\Omega \subset \{ x \in \Sigma : |x'| \leq 2 R \}.\]
The norm $\VERT \centerdot \VERT$ obviously depends on $\Omega$ and $k$ but these dependences are harmless for our problems. 
The well-posedness of the problem \eqref{pb:Dirichlet_0}, together with the fact that 
\begin{equation} 
\VERT f \VERT = 0  \Rightarrow f = 0, \label{proper:norm}
\end{equation}
guarantee that $\VERT \centerdot \VERT$ is a norm on $H^{3/2}_{\overline{\Gamma_1^D}} (\Gamma_1)$. The property \eqref{proper:norm} follows from the weak unique continuation property for the equation $-( \Delta + k^2) v_f = 0$ in $\Sigma$.

With this new norm on $H^{3/2}_{\overline{\Gamma_1^D}} (\Gamma_1)$, we will show that $\Lambda^j_q$ is a bounded operator.
\begin{lemma} \label{lem:op-norm} \sl
The following inequality holds
\[\| \Lambda^j_q  f\|_{H^{-3/2}(\Gamma^N_j)} \lesssim \VERT f \VERT,\]
for every $f \in H^{3/2}_{\overline{\Gamma^D_1}} (\Gamma_1)$, where 
\begin{equation}
\| \Lambda^j_q  f\|_{H^{-3/2}(\Gamma^N_j)} := \sup_{g \in H^{3/2}_{\overline{\Gamma^N_j}} (\Gamma_j) \setminus \{ 0 \}}  \frac{| \int_{\Gamma_j} \Lambda_q f\, g \,dx'|}{\| g  \|_{H^{3/2}(\Gamma_j)}}. \label{def:H-3/2}
\end{equation}
The implicit constant here depends on $k$, any upper bound on $\| q \|_{L^\infty(\Sigma)}$ and $\Omega$.
\end{lemma}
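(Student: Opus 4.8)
The plan is to bound, for an arbitrary nonzero $g\in H^{3/2}_{\overline{\Gamma^N_j}}(\Gamma_j)$, the pairing $\int_{\Gamma_j}\Lambda_q f\,g\,dx'$ by a constant times $\VERT f\VERT\,\|g\|_{H^{3/2}(\Gamma_j)}$ and then to take the supremum over $g$, as in \eqref{def:H-3/2}. Let $u$ denote the admissible solution of \eqref{pb:Dirichlet_q}, so that $\Lambda_q f=\partial_\nu u|_{\partial\Sigma}$, and $v_f$ the solution of \eqref{pb:Dirichlet_0}, so that $\VERT f\VERT=\|v_f\|_{L^2(\Omega)}$. First I would compare $u$ with $v_f$: the difference $\phi:=u-v_f$ solves $(-\Delta-k^2+q)\phi=-q\,v_f$ in $\Sigma$ with vanishing Dirichlet data, and $-q\,v_f$ is supported in $Q\subseteq\Omega$; since $k$ is admissible for $q$, the well-posedness estimate for \eqref{pb:homoDIrichlet_q} recalled in Section~\ref{sec:statement of main result} gives $\|\phi\|_{H^2(\Omega)}\lesssim\|q\,v_f\|_{L^2(\Sigma)}\lesssim\|v_f\|_{L^2(\Omega)}$, hence $\|u\|_{L^2(\Omega)}\lesssim\VERT f\VERT$.

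Next I would construct a suitable test function $G\in H^2(\Omega)$. Since $\supp g\subseteq\overline{\Gamma^N_j}$ is a compact subset of the flat portion $\mathrm{int}_{\Gamma_j}(\partial\Omega\cap\Gamma_j)$ of $\partial\Omega$, the extension of $g$ by zero lies in $H^{3/2}(\partial\Omega)$ and is supported away from the edges where $\partial\Omega\cap\Sigma$ meets $\Gamma_1\cup\Gamma_2$; using the surjectivity of the trace map $w\mapsto(w|_{\partial\Omega},\partial_\nu w|_{\partial\Omega})$ from $H^2(\Omega)$ onto $H^{3/2}(\partial\Omega)\times H^{1/2}(\partial\Omega)$ near that flat (hence smooth) portion and then extending by zero, one gets $G\in H^2(\Omega)$ with $G|_{\partial\Omega}$ equal to that zero-extension of $g$, with $\partial_\nu G|_{\partial\Omega}=0$, and with $\|G\|_{H^2(\Omega)}\lesssim\|g\|_{H^{3/2}(\Gamma_j)}$.

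Then I would apply Green's second identity on $\Omega$ to $u$ and $G$. Since $\partial_\nu G$ vanishes on $\partial\Omega$ and $G$ vanishes on $\partial\Omega$ except on $\partial\Omega\cap\Gamma_j$, where it equals $g$, the boundary term collapses to $\int_{\Gamma^N_j}(\Lambda^j_q f)\,g\,dx'$; and, because $\Delta u=(q-k^2)u$ in $\Omega$,
\[\int_{\Gamma^N_j}(\Lambda^j_q f)\,g\,dx'=\int_\Omega G\,\Delta u\,dx-\int_\Omega u\,\Delta G\,dx=\int_\Omega(q-k^2)\,u\,G\,dx-\int_\Omega u\,\Delta G\,dx.\]
Each term on the right is $\lesssim\|G\|_{H^2(\Omega)}\|u\|_{L^2(\Omega)}$, with implicit constant depending on $k$ and on $\|q\|_{L^\infty(\Sigma)}$, so combining with the two previous steps yields $|\int_{\Gamma^N_j}(\Lambda^j_q f)\,g\,dx'|\lesssim\|g\|_{H^{3/2}(\Gamma_j)}\,\VERT f\VERT$; taking the supremum over $g$ finishes the proof, with a constant depending only on $k$, $\Omega$ and an upper bound for $\|q\|_{L^\infty(\Sigma)}$. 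The argument is the same for $j=1$ and $j=2$.

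The hard part is the construction of $G$, and the reason for insisting that $\partial_\nu G|_{\partial\Omega}=0$ is that this is exactly what kills the otherwise uncontrollable term $\int_{\partial\Omega}u\,\partial_\nu G\,dS$ in Green's identity: $u|_{\partial\Omega}$ cannot be bounded by $\VERT f\VERT$, since it equals $f$ on $\Gamma_1$ and is not constrained at all on the artificial boundary $\partial\Omega\cap\Sigma$. Reconciling the vanishing Neumann trace with the prescribed Dirichlet trace (the zero-extension of $g$) is what forces the use of the geometric hypotheses on $\Omega$ — above all $\overline{\Gamma^N_j}\subseteq\mathrm{int}_{\Gamma_j}(\partial\Omega\cap\Gamma_j)$ — which confine the construction to the smooth, flat part of $\partial\Omega$ and away from its edges.
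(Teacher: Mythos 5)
Your proposal is correct and follows essentially the same route as the paper's proof: build, via the trace theorem and the hypothesis $\overline{\Gamma^N_j}\subseteq\mathrm{int}_{\Gamma_j}(\partial\Omega\cap\Gamma_j)$, a test function in $H^2(\Omega)$ whose trace is the zero-extension of $g$ and whose normal derivative vanishes on $\partial\Omega$, apply Green's identity together with the equation for $u$ to get $\|\Lambda^j_q f\|_{H^{-3/2}(\Gamma^N_j)}\lesssim\|u\|_{L^2(\Omega)}$, and reduce $\|u\|_{L^2(\Omega)}$ to $\|v_f\|_{L^2(\Omega)}=\VERT f\VERT$ via the well-posedness of the zero-Dirichlet problem for $u-v_f$. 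The only difference is the order of the steps (you compare $u$ with $v_f$ first rather than last), which is immaterial.
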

Note that Theorem \ref{th:op-norm} is an immediate consequence of this lemma. Moreover, Lemma \ref{lem:op-norm} still holds if each occurrence of $\overline{\Gamma^D_1}$ in its statement is replaced by any compact subset $K$ of $\Gamma_1$. In particular, the intersection between $\overline{\Omega}$ and $K$ is even allowed to be empty.

\begin{proof}
Fix $f \in H^{3/2}_{\overline{\Gamma^D_1}} (\Gamma_1)$. For any $g \in H^{3/2}_{\overline{\Gamma^N_j}} (\Gamma_j)$, we have that
\[ \int_{\Gamma_j} \Lambda_q f\, g \,dx' = \int_{\Gamma_j} \partial_\nu u\, g \,dx' \]
with $u$ solving \eqref{pb:Dirichlet_q}. By the trace theorem for $\Omega$, there exists $v \in H^2 (\Omega)$  such that $v(x) = g(x)$ for almost every $x \in \Gamma^N_j$, $v(x) = 0 $ for almost every $x \in \partial \Omega \setminus \Gamma^N_j$, $\partial_\eta v |_{\partial \Omega} = 0$ and
\begin{equation}
\| v \|_{H^2(\Omega)} \lesssim \| g \|_{H^{3/2}(\Gamma_j)}. \label{es:rightINV}
\end{equation}
Here $\eta$ denotes the outward-pointing unit normal vector along $\partial \Omega$, and the implicit constant depends on $\Omega$. Then, using Green's formula, we get that
\[ \int_{\Gamma_j} \Lambda_q f\, g \,dx' = \int_\Omega \Delta u v - u \Delta v \,dx \]
which, by \eqref{es:rightINV}, implies
\[\left| \int_{\Gamma_j} \Lambda_q f\, g \,dx' \right| \lesssim \big( \| u \|_{L^2(\Omega)} + \| \Delta u \|_{L^2(\Omega)} \big) \| g \|_{H^{3/2}(\Gamma_j)}. \]
Since $u$ is solution to \eqref{pb:Dirichlet_q}, we have
\[\| \Delta u \|_{L^2(\Omega)} \leq (k^2 + \| q \|_{L^\infty (\Sigma)}) \| u \|_{L^2(\Sigma)}\]
and therefore, by \eqref{def:H-3/2},
\[\| \Lambda^j_q  f\|_{H^{-3/2}(\Gamma^N_j)} \lesssim \| u \|_{L^2(\Omega)},\]
where the implicit constant depends on $k$, any upper bound on $\| q \|_{L^\infty(\Sigma)}$ and $\Omega$.

Let $w$ be defined by $w:= u - v_f$ with $v_f$ as in \eqref{pb:Dirichlet_0}. Then, $u = w + v_f$ with $w$ being the unique solution to the Dirichlet problem
\begin{eqnarray*}
\left\{ \begin{array}{r c l}
(-\Delta - k^2 + q) w &=& - q v_f \text{ in } \Sigma, \\
w|_{\partial \Sigma} &=& 0.
\end {array} \right.
\end{eqnarray*}
By the triangle inequality and the well-posedness of this problem, we deduce
\[\| \Lambda^j_q  f\|_{H^{-3/2}(\Gamma^N_j)} \lesssim \| v_f \|_{L^2(\Omega)},\]
which is nothing but the claimed inequality.
\end{proof}

Next, we turn our attention to the integral estimates, which can be stated as follows.

\begin{proposition} \label{prop:integralEst} \sl
Fix potentials $q_1, q_2 \in L^\infty(\Sigma)$ both of which are compactly supported in $Q$, and let $M>0$ denote an upper bound on $\|q_j \|_{L^\infty(\Sigma)} \leq M$ for $j=1,2$. Consider $k \geq 0$ to be admissible for $q_1, q_2$ and the zero potential. Assume that $u_1$ and $u_2$ belong to $H^2 (\Omega)$ and are solutions to
\begin{eqnarray}
(-\Delta - k^2 + q_1) u_1 &=& 0 \text{ in } \Omega, \nn \\
u_1|_{\Gamma_2 \cap \partial \Omega} &=& 0 \nn
\end{eqnarray}
and
\[
(-\Delta - k^2 + q_2) u_2 = 0 \text{ in } \Omega,
\]
respectively.
\begin{itemize}
\item[(a)] If $u_2|_{\Gamma_2 \cap \partial \Omega} = 0$, then there exists a constant $\delta = \delta(L, R, k) > 0$ such that, if $\| \Lambda^1_{q_2} - \Lambda^1_{q_1} \|_\ast < 1/\delta $, then
\begin{equation*}
\begin{aligned}
\bigg|\int_\Omega (q_1 - q_2) u_1 u_2 \, dx\bigg| \lesssim & \frac{\| u_1 \|_{L^2(\Omega)} \| u_2 \|_{H^2 (\Omega)}} {\left[ 1 + \bigg| \log \left(\delta \| \Lambda^1_{q_2} - \Lambda^1_{q_1} \|_\ast \right) \bigg| \right]^{1/2}}.
\end{aligned}
\end{equation*}

\item[(b)] There exist constants $C=C(L, R)>0$ and $\delta = \delta(L, R, k) > 0$ such that, if $\| \Lambda^2_{q_2} - \Lambda^2_{q_1} \|_\ast < 1/\delta $, then
\begin{equation*}
\begin{aligned}
\bigg|\int_\Omega (q_1 - q_2) u_1 u_2 \, dx\bigg| \lesssim & e^{c \tau |\zeta| } \frac{\| u_1 \|_{L^2(\Omega)} \| u_2 \|_{H^2 (\Omega)}} {\left[ 1 + \bigg| \log \left(\delta \| \Lambda^2_{q_2} - \Lambda^2_{q_1} \|_\ast \right) \bigg| \right]^{1/2}} \\
&+ \frac{1}{\tau^{1/2}} \| e^{\tau x \cdot \zeta} u_2 \|_{H^1 (\Omega)} \| e^{-\tau x \cdot \zeta} u_1 \|_{L^2(\Omega)}
\end{aligned}
\end{equation*}
for all $\tau \geq \tau_0 := C (k^2 + M)$ and $\zeta \in \R^3$ with $\zeta \cdot \eta|_{\Gamma^N_1} \geq 1 $; here, $c > 2(2R + L)$.
\end{itemize}
\end{proposition}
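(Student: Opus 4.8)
The strategy is the classical Alessandrini-type argument, adapted to the slab and combined with the three pieces of machinery advertised in the introduction: the Bukhgeim--Uhlmann Carleman estimate with boundary terms, Phung's quantitative unique continuation from a proper boundary subset, and the Li--Uhlmann Runge approximation, all bookkept through the norm $\VERT\centerdot\VERT$ and its dual $\|\centerdot\|_\ast$. I would treat part (a) first, since part (b) is an elaboration of it with one extra Carleman step.

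\textit{Part (a).} Start from Green's/Alessandrini's identity: if $u_1$ solves $(-\Delta-k^2+q_1)u_1=0$ and $u_2$ solves $(-\Delta-k^2+q_2)u_2=0$ in $\Omega$, and we let $\tilde u_2\in H^2_{\mathrm{loc}}(\overline\Sigma)$ solve the Dirichlet problem \eqref{pb:Dirichlet_q} for $q_2$ in $\Sigma$ with $\tilde u_2|_{\Gamma_1}=u_2|_{\Gamma_1}$ (extended suitably from $\partial\Omega\cap\Gamma_1\subseteq\Gamma_1^D$) and $\tilde u_2|_{\Gamma_2}=0$, then integrating by parts over $\Omega$ and using that both $u_1$ and $u_2$ vanish on $\Gamma_2\cap\partial\Omega$ produces
\[
\int_\Omega(q_1-q_2)u_1u_2\,dx=\int_{\partial\Omega}(\partial_\eta u_1\,u_2-u_1\,\partial_\eta u_2)\,dS+\cdots,
\]
and the boundary terms on $\partial\Omega\setminus\Gamma_1$ must be controlled. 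Here is where the two sub-steps enter. First, one reorganises so that the genuinely unknown boundary contribution is $\int_{\Gamma_1^N}(\Lambda^1_{q_1}-\Lambda^1_{q_2})f\cdot g$, which by definition of $\|\centerdot\|_\ast$ is bounded by $\|\Lambda^1_{q_2}-\Lambda^1_{q_1}\|_\ast\,\VERT f\VERT\lesssim\|\Lambda^1_{q_2}-\Lambda^1_{q_1}\|_\ast\,\|u_2\|_{H^2(\Omega)}$ (using the trace/well-posedness bounds from Section 2 to pass from $\VERT f\VERT=\|v_f\|_{L^2(\Omega)}$ to $\|u_2\|_{H^2(\Omega)}$, after the Runge approximation that lets us replace $u_2$ by a genuine Dirichlet solution up to a small $L^2$ error — this is the step that forces the $L^2$-norm in $\VERT\centerdot\VERT$ rather than a stronger one). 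Second, the contribution supported on $\partial\Omega\cap\Gamma_1\setminus\Gamma_1^N$ and on the "lateral" part of $\partial\Omega$ is controlled by Phung's quantitative unique continuation: since $u_1-\tilde u_2$ (or the appropriate difference solving a Schrödinger equation) vanishes together with its normal derivative on the observed patch $\Gamma_1^N$ up to the measured error, one gets an interpolation-type bound of the shape $\|(\text{difference})\|_{H^1(\Omega')}\lesssim \big(\|u_1\|+\|u_2\|\big)^{1-\mu}\cdot(\text{small})^{\mu}$ with a logarithmic modulus, i.e. a factor $\big[1+|\log(\delta\|\Lambda^1_{q_2}-\Lambda^1_{q_1}\|_\ast)|\big]^{-1/2}$ after optimising. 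Combining, and absorbing everything into $\|u_1\|_{L^2(\Omega)}\|u_2\|_{H^2(\Omega)}$, yields the estimate of (a); the threshold $\delta$ is whatever is needed to keep $\delta\|\Lambda^1_{q_2}-\Lambda^1_{q_1}\|_\ast<1$ so the logarithm is well-defined and positive, and its dependence on $L,R,k$ comes from the constants in the Carleman/Phung inequalities, which only see the geometry $Q\subset\Omega\subset\{|x'|\le 2R\}$ and the frequency $k$.

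\textit{Part (b).} Now only $u_1$ is assumed to vanish on $\Gamma_2\cap\partial\Omega$, while $u_2$ need not. Repeating the integration by parts, there is an additional uncontrolled boundary term living on $\Gamma_2\cap\partial\Omega$ coming from $u_2$ and $\partial_\eta u_2$ there. The idea, following Bukhgeim--Uhlmann, is to run the above argument not for $u_1,u_2$ directly but after conjugating by the Carleman weight $e^{\pm\tau x\cdot\zeta}$: the Carleman estimate with boundary terms for $-\Delta-k^2+q$ (valid for $\tau\ge\tau_0=C(k^2+M)$) lets one bound the bad $\Gamma_2$-boundary integral by the $\tau^{-1/2}$-weighted interior quantity $\tau^{-1/2}\|e^{\tau x\cdot\zeta}u_2\|_{H^1(\Omega)}\|e^{-\tau x\cdot\zeta}u_1\|_{L^2(\Omega)}$ — provided the weight is chosen so that $\zeta\cdot\eta\ge 1$ on $\Gamma_1^N$ (so the "good" sign of the boundary term occurs on the observed face, the "bad" one gets the Carleman gain), which is exactly the hypothesis on $\zeta$. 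The price of passing the measured-error term through the conjugation is a factor $e^{c\tau|\zeta|}$ with $c$ a bit larger than the diameter-type quantity $2(2R+L)$ — it is $2$ times the supremum over $\Omega$ of $|x\cdot\zeta/|\zeta||$, which is at most $2(2R+L)$ given $\Omega\subset\{|x'|\le 2R,\,0\le x_3\le L\}$, hence the strict inequality $c>2(2R+L)$. Assembling: the unknown-DN part gives the first displayed term (now carrying $e^{c\tau|\zeta|}$), the Phung step again gives the logarithmic factor, and the Carleman-controlled $\Gamma_2$ term gives the second displayed term.

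\textit{Main obstacle.} The delicate point is the second sub-step of part (a): getting the quantitative unique continuation from the proper boundary subset $\Gamma_1^N$ into the interior with an explicit (logarithmic) rate, and doing the bookkeeping so that the "large" factors $\|u_i\|$ enter only to the power one while the measured smallness enters with a genuine positive power, yielding the clean $[1+|\log(\cdot)|]^{-1/2}$. One must be careful that $\partial\Omega$ is smooth and that $\Gamma_1^N$ sits compactly inside $\partial\Omega\cap\Gamma_1\subseteq\Gamma_1^D$ with room on both sides (the conditions $\overline{\Gamma_j^N}\subseteq\mathrm{int}_{\Gamma_j}(\partial\Omega\cap\Gamma_j)$), so Phung's estimate applies on an interior neighbourhood and can be propagated across $\Omega$ by a standard three-spheres chain; the resulting exponents must then be optimised against the free interpolation parameter. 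The Runge approximation, by contrast, is soft once we accept the $L^2$-based definition of $\VERT\centerdot\VERT$; the Carleman estimate of part (b) is quoted verbatim from \cite{BU}, so the only real work there is tracking the weight through the error term to produce $e^{c\tau|\zeta|}$ with the stated $c$.
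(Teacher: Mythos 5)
Your proposal assembles the right external ingredients (the Bukhgeim--Uhlmann Carleman estimate, Phung's quantified unique continuation, the Runge approximation, the $\ast$-norm), but it misses the central construction that makes them applicable. The paper does not integrate $u_1$ against $u_2$ (or against a $\tilde u_2$) directly: it first replaces $u_1$, via the Runge argument, by a global solution $v_1$ of $(-\Delta-k^2+q_1)v_1=0$ in $\Sigma$ with $\supp(v_1|_{\partial\Sigma})\subseteq\overline{\Gamma_1^D}$, sets $f:=v_1|_{\Gamma_1}$, lets $v_2$ solve the same Dirichlet problem with $q_2$, and works with $w:=v_2-v_1$, which satisfies $(-\Delta-k^2+q_2)w=(q_1-q_2)v_1$ in $\Sigma$ and $w|_{\partial\Sigma}=0$. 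This $w$ is the object whose Neumann trace on the measured patch is literally $(\Lambda_{q_2}-\Lambda_{q_1})f$, hence small in the $\ast$-sense, and which solves the free Helmholtz equation on a neighbourhood $U\supset Q'$ disjoint from $Q$, so that Phung's observability estimate (observation of $\partial_\nu w$ on $\Gamma\subset\Gamma_l^N$) applies to it. Your candidate $u_1-\tilde u_2$ has neither property: its Cauchy data on $\Gamma_1^N$ is not small and it satisfies no homogeneous equation near the observation patch, so the unique-continuation step as you describe it fails. Likewise the cutoff $\chi$ ($\equiv 1$ on $\{|x'|\le R+\epsilon\}$, vanishing near $\partial\Omega\cap\Sigma$) is not cosmetic: it kills the lateral boundary terms, which involve only $u_1,u_2$ and are not small, at the price of the interior commutator term $\int_\Omega w(\Delta\chi\,u_2+2\nabla\chi\cdot\nabla u_2)\,dx$ supported in $Q'$, and it is exactly this term that the quantified unique continuation \eqref{es:locUC} controls, producing the $[1+|\log(\delta\|\cdot\|_\ast)|]^{-1/2}$ factor; your unspecified ``reorganisation'' has no mechanism for the lateral part of $\partial\Omega$. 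Note also that Runge is applied to $u_1$ (so that $f$ is an admissible datum supported in $\overline{\Gamma_1^D}$ and only $\|u_1\|_{L^2(\Omega)}$ enters), not to $u_2$, whose $H^2$-norm is needed for the trace bounds.

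In part (b) your bookkeeping is inverted. The extra term caused by $u_2$ not vanishing on $\Gamma_2$ is $\int_{\Gamma_2^N}\chi u_2\,\partial_\nu w\,dx'$, and in case (b) that is the \emph{measured} face, so it is estimated directly through $\|\Lambda^2_{q_2}-\Lambda^2_{q_1}\|_\ast$; the term that needs the Carleman estimate is the one on $\Gamma_1^N$, where the Neumann data is not measured. Accordingly the weight is chosen with $\zeta\cdot\eta\ge 1$ on $\Gamma_1^N$ so that the Carleman boundary gain sits on the \emph{unobserved} face, while the bad-sign contribution on $\Gamma_2$ is absorbed using the measurements via interpolation, $\|\chi\partial_\nu w\|_{L^2(\Gamma_2)}\lesssim\|\Lambda^2_{q_2}-\Lambda^2_{q_1}\|_\ast^{1/4}\|v_1\|_{L^2(\Omega)}$; your claim that the good sign occurs on the observed face is the opposite of what is required. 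Moreover, the estimate of \cite{BU} applies to functions vanishing on all of $\partial\Omega$, so it cannot be run on $u_1,u_2$ ``after conjugation''; in the paper it is applied to $\chi w$, and the term $\tau^{-1/2}\|e^{\tau x\cdot\zeta}u_2\|_{H^1(\Omega)}\|e^{-\tau x\cdot\zeta}u_1\|_{L^2(\Omega)}$ comes from the source $(q_1-q_2)v_1\approx(q_1-q_2)u_1$ in the equation satisfied by $\chi w$, not from conjugating $u_1,u_2$ themselves. As written, your part (b) would not produce the stated bound.
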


\begin{proof}
Let $v_1 \in H^2_\mathrm{loc} (\overline{\Sigma}) $ be a solution to $ (-\Delta - k^2 + q_1) v_1 = 0 $ in $ \Sigma $ with $ \supp (v_1|_{\partial \Sigma}) \subseteq \overline{\Gamma^D_1}$. Writing $f := v_1|_{\Gamma_1}$, we know that there exists a unique $v_2 \in H^2_\mathrm{loc} (\overline{\Sigma})$ such that
\begin{eqnarray}
(-\Delta - k^2 +q_2) v_2 &=& 0 \text{ in } \Sigma, \nn \\
v_2|_{\Gamma_1} &=& f, \nn \\
v_2|_{\Gamma_2} &=& 0. \nn
\end{eqnarray}
Then, $w:= v_2 - v_1$ belongs to $H^2_{\text{loc}}(\overline{\Sigma})$, and it is the unique admissible solution of
\begin{equation}
\begin{aligned}
(-\Delta - k^2 +q_2) w &= (q_1 - q_2) v_1 \text{ in } \Sigma, \\
w|_{\partial \Sigma} &= 0.
\end{aligned} \label{pb:w}
\end{equation}

Obviously,
\begin{equation}
\begin{aligned}
\int_\Omega (q_1 &- q_2) u_1 u_2 \, dx \\
&= \int_\Omega (q_1 - q_2) v_1 \chi u_2 \, dx + \int_\Omega (q_1 - q_2) (u_1 - v_1) u_2 \, dx \label{id:approx}
\end{aligned}
\end{equation}
where $\chi$ is a bump function in $\R^2$ which satisfies $\chi(x') = 1$ for $|x'| \leq R + \epsilon $ and $\supp \chi \subset \{ |x'| \leq R' -\epsilon \}$ for a small enough $\epsilon > 0$. Using the equation solved by $w$, applying Green's formula in $\Omega$, utilizing the equation satisfied by $u_2$ together with $w|_{\partial \Sigma} = 0$ and taking advantage of $\chi = 0$ in a neighbourhood of $\partial \Omega \cap \Sigma$, we get
\begin{equation}
\begin{aligned}
\int_\Omega (q_1 &- q_2) v_1 \chi u_2 \, dx \\
& = - \int_\Omega w ( \Delta\chi u_2 + 2 \nabla \chi \cdot \nabla u_2 ) \, dx - \int_{\Gamma^N_1 \cup \Gamma^N_2} \chi u_2 \partial_\nu w\, d x'.
\end{aligned} \label{id:integrationBYparts}
\end{equation}

Using \eqref{id:approx}, \eqref{id:integrationBYparts} and that $\supp q_j \subseteq Q$ for $j = 1, 2$, we immediately see that
\begin{equation}
\begin{aligned}
\bigg|\int_\Omega (&q_1 - q_2) u_1 u_2 \, dx\bigg| \lesssim \| \chi (u_1 - v_1) \|_{L^2(\Omega)} \| \chi u_2 \|_{L^2(\Omega)} \\
 &+ \bigg| \int_\Omega w ( \Delta\chi u_2 + 2 \nabla \chi \cdot \nabla u_2 ) \, dx \bigg| + \bigg| \int_{\Gamma^N_1 \cup \Gamma^N_2} \chi u_2 \partial_\nu w\, d x' \bigg|.
\end{aligned} \label{es:INTestimate}
\end{equation}

We next have to obtain an upper bound on each term in the previous inequality. The method for estimating each of the two boundary integrals depends on whether the domain of integration does or does not coincide with the part of $\partial \Sigma$ on which the Neumann data is measured. The method for estimating the interior integral on the right-hand side of \eqref{es:INTestimate} relies on a quantified unique continuation property for $w$.

To fix ideas, let the Neumann data be measured on $\Gamma_l^N$. Start by estimating the boundary integral along $\Gamma_l^N$ from \eqref{es:INTestimate}. Using $\partial_\nu w|_{\partial \Sigma} = (\Lambda_{q_2} - \Lambda_{q_1}) f$, $\supp\chi \subseteq \{ |x'| \leq R' \}$, and \eqref{def:H-3/2}, we get
\[\bigg| \int_{\Gamma^N_l} \chi u_2 \partial_\nu w\, d x' \bigg| \leq \| \chi u_2 \|_{H^{3/2} (\Gamma_l)} \| (\Lambda_{q_1}^l - \Lambda_{q_2}^l)f \|_{H^{-3/2}(\Gamma^N_l)}.\]
The last term on the right-hand side can be estimated using the definition of the operator norm and \eqref{def:tripleNORM} as follows:
\begin{equation}
\| (\Lambda_{q_1}^l - \Lambda_{q_2}^l)f \|_{H^{-3/2}(\Gamma^N_l)} \leq \| \Lambda_{q_1}^l - \Lambda_{q_2}^l \|_\ast (\| v_f - v_1 \|_{L^2 (\Omega)} + \| v_1 \|_{L^2(\Omega)}), \label{es:OPboundedness}
\end{equation}
where $v_f$ satisfies \eqref{pb:Dirichlet_0}. Note that $v_f - v_1$ satisfies
\begin{eqnarray*}
\left\{ \begin{array}{r c l}
(-\Delta - k^2) (v_f - v_1) &=&  q_1 v_1 \text{ in } \Sigma, \\
(v_f - v_1)|_{\partial \Sigma} &=& 0.
\end {array} \right.
\end{eqnarray*}
By the well-posedness of this problem, we have
\begin{equation}
\| v_f - v_1 \|_{L^2(\Omega)} \lesssim \| \chi v_1 \|_{L^2(\Sigma)}. \label{es:vf-u1}
\end{equation}
Thus, using \eqref{es:OPboundedness}, \eqref{es:vf-u1} and the boundedness of the trace operator associated with $\Omega$, the boundary term under consideration is bounded in the following way:
\begin{equation}
\bigg| \int_{\Gamma^N_l} \chi u_2 \partial_\nu w\, d x' \bigg| \lesssim \| \Lambda_{q_1}^l - \Lambda_{q_2}^l \|_\ast \| u_2 \|_{H^2 (\Omega)} \| v_1 \|_{L^2(\Omega)}. \label{ter:boundaryDIRECTLY}
\end{equation}
Under the assumptions in (a), the inequalities \eqref{ter:boundaryDIRECTLY} and \eqref{es:INTestimate} imply
\begin{equation}
\begin{aligned}
\bigg|\int_\Omega (q_1 &- q_2) u_1 u_2 \, dx\bigg| \lesssim \| u_1 - v_1 \|_{L^2(\Omega)} \| u_2 \|_{L^2(\Omega)} \\
 &+ \| w \|_{L^2(Q')} \| u_2 \|_{H^1(\Omega)} + \| \Lambda_{q_1}^1 - \Lambda_{q_2}^1 \|_\ast \| u_2 \|_{H^2 (\Omega)} \| v_1 \|_{L^2(\Omega)},\\
\end{aligned} \label{es:NGamma1}
\end{equation}
where $Q' := \{ x\in\Sigma : R + \epsilon < |x'| < R' - \epsilon \}$. In order to get the estimate in (a) from \eqref{es:NGamma1}, we have to control $w$ in $Q'$ and $u_1 - v_1$ in $\Omega$. We postpone this for a while; instead, we now focus on estimating the other boundary term in \eqref{es:INTestimate}, which only appears under the assumptions in (b). More concretely, we focus on estimating the term
\begin{equation}
\bigg| \int_{\Gamma^N_1} \chi u_2 \partial_\nu w\, d x' \bigg| \label{term:boundaryGamma2}
\end{equation}
in terms of $\| \Lambda_{q_2}^2 - \Lambda_{q_1}^2 \|_\ast$ and a sufficiently large parameter $\tau$.

To fix ideas, let $\zeta \in \R^3$ be arbitrarily chosen with $\zeta \cdot e_3 \geq 1$. In order to control \eqref{term:boundaryGamma2}, we use a Carleman inequality proven by Bukhgeim and Uhlmann in \cite{BU} (see Corollary 2.3).  Since $|\zeta| \geq |\zeta \cdot e_3| \geq 1$, the Carleman inequality can be applied to our situation as follows: For any $q \in L^\infty(\Omega)$ with $\| q \|_{L^\infty(\Omega)} \leq M$, there exists a constant $C = C(L, R) > 0$ such that
\begin{equation}
\begin{aligned}
\tau^2 \int_\Omega |e^{-\tau x \cdot \zeta} u|^2 \,dx &+ \tau \int_{\partial \Omega} (\zeta \cdot \eta) |e^{-\tau x \cdot \zeta} \partial_\eta u|^2 \, dS \\
&\lesssim \int_\Omega |e^{-\tau x \cdot \zeta} (-\Delta - k^2 + q)u|^2 \, dx
\end{aligned} \label{es:carleman}
\end{equation}
for all $u \in H^2(\Omega)$ with $u|_{\partial \Omega} = 0$, $\tau \geq C (k^2 + M)$; the implicit constant in \eqref{es:carleman} depends on $R$ and $L$.

Start by noting that
\begin{equation}
\bigg| \int_{\Gamma^N_1} \chi u_2 \partial_\nu w\, d x' \bigg| \leq \| e^{\tau x \cdot \zeta} u_2 \|_{L^2 (\Gamma^N_1)} \| e^{-\tau x \cdot \zeta} \partial_\eta (\chi w) \|_{L^2 (\Gamma^N_1)} \label{es:boundaryCARLE}
\end{equation}
since $\eta|_{\Gamma_1^N} = \nu|_{\Gamma^N_1}$ is a constant multiple of $e_3$ and since $\partial_{x_3} \chi = 0$. Here $e_3$ denotes the vector satisfying $x_3 = e_3 \cdot x$. The first term on the right-hand side can be bounded as follows
\begin{equation}
\| e^{\tau x \cdot \zeta} u_2 \|_{L^2 (\Gamma^N_1)} \lesssim \| e^{\tau x \cdot \zeta} u_2 \|_{H^1(\Omega)} \label{es:otraantes}
\end{equation}
using the boundedness of the trace operator associated with $\Omega$, where the implicit constant depends on $\Omega$. 
We estimate the second term on the right-hand side of \eqref{es:boundaryCARLE} as
\begin{equation}
\| e^{-\tau x \cdot \zeta} \partial_\eta (\chi w) \|^2_{L^2 (\Gamma^N_1)} \leq \int_{\Gamma^N_1} \zeta \cdot \eta |e^{-\tau x \cdot \zeta} \partial_\eta (\chi w)|^2 \, dx'. \label{es:unamas}
\end{equation}
Since $\chi w \in H^2(\Omega)$ vanishes on $\partial \Omega$, an application of \eqref{es:carleman} with $u$ replaced by $\chi w$ and $q$ replaced by $q_2$ shows that the right-hand side of \eqref{es:unamas} can be bounded by
\begin{equation}
\frac{1}{\tau} \int_\Omega |e^{-\tau x \cdot \zeta} (-\Delta - k^2 + q_2)(\chi w)|^2 \, dx + |\zeta| e^{2c\tau |\zeta|} \| \chi \partial_\nu w\|^2_{L^2(\Gamma_2)}, \label{term:outputCARLE}
\end{equation}
where $c := 2R + L$ is not the $c$ from the statement of Proposition \ref{prop:integralEst} (b). Furthermore, since $w$ solves \eqref{pb:w}, we have
\begin{align*}
\int_\Omega |e^{-\tau x \cdot \zeta}& (-\Delta - k^2 + q_2)(\chi w)|^2 \, dx \lesssim e^{2c\tau |\zeta|} \| v_1 - u_1 \|^2_{L^2(\Omega)} \\
&+ \| e^{-\tau x \cdot \zeta} u_1 \|^2_{L^2(\Omega)} + \int_\Omega |e^{-\tau x \cdot \zeta} (\Delta \chi w +2 \nabla\chi \cdot \nabla w) |^2 \, dx \\
\lesssim & e^{2c\tau|\zeta|} \| w \|^2_{H^1(Q')} + \big( e^{2c\tau |\zeta|} \| v_1 - u_1 \|^2_{L^2(\Omega)} + \| e^{-\tau x \cdot \zeta} u_1 \|^2_{L^2(\Omega)} \big).
\end{align*}
These computations are meant to bound the first term in \eqref{term:outputCARLE}. We now take care of the second one. By interpolation and using that $\partial_\nu w|_{\partial \Sigma} = (\Lambda_{q_2} - \Lambda_{q_1}) f$, we get
\[\| \chi \partial_\nu w\|_{L^2(\Gamma_2)} \leq \| \chi (\Lambda_{q_2} - \Lambda_{q_1}) f \|^{1/4}_{H^{-3/2}(\Gamma_2)} \| \chi \partial_\nu w\|^{3/4}_{H^{1/2}(\Gamma_2)}. \]
It is a simple computation to show that
\[\| \chi (\Lambda_{q_2} - \Lambda_{q_1}) f \|_{H^{-3/2}(\Gamma_2)} \lesssim \| (\Lambda^2_{q_2} - \Lambda^2_{q_1}) f \|_{H^{-3/2}(\Gamma^N_2)}\]
with the implicit constant depending on $R$. Following \eqref{es:OPboundedness} and \eqref{es:vf-u1},
we get
\[\| \chi \partial_\nu w\|_{L^2(\Gamma_2)} \lesssim \| \Lambda^2_{q_2} - \Lambda^2_{q_1} \|^{1/4}_\ast \| v_1 \|^{1/4}_{L^2(\Omega)} \| \chi \partial_\nu w\|^{3/4}_{H^{1/2}(\Gamma_2)}. \]
In order to estimate the last factor on the right-hand side, we are going to use the boundedness of the trace operator in $\Omega$ and the well-posedness of \eqref{pb:w} to get control on $\| w \|_{H^2(\Omega)}$. Thus, we get
\[ \| \chi \partial_\nu w\|_{H^{1/2}(\Gamma_2)} \lesssim \| w \|_{H^2(\Omega)} \lesssim \| v_1 \|_{L^2(\Omega)}, \]
which implies
\begin{equation}
\| \chi \partial_\nu w\|_{L^2(\Gamma_2)} \lesssim \| \Lambda^2_{q_2} - \Lambda^2_{q_1} \|^{1/4}_\ast \| v_1 \|_{L^2(\Omega)}.
\label{es:boundaryL2}
\end{equation}

Finally, gathering \eqref{es:boundaryCARLE}, \eqref{es:otraantes},
\eqref{es:unamas} and the computations to estimate each term on \eqref{term:outputCARLE}, we can state that
\begin{equation}
\begin{aligned}
\bigg| \int_{\Gamma^N_1} \chi u_2 \partial_\nu w\, d x' \bigg| \lesssim \| e^{\tau x \cdot \zeta} u_2 \|_{H^1(\Omega)} \Bigg[ \frac{1}{\tau^{1/2}} \| e^{-\tau x \cdot \zeta} u_1 \|_{L^2(\Omega)} \\
+ \frac{e^{c\tau|\zeta|}}{\tau^{1/2}} \big( \| w \|_{H^1(Q')} + \| v_1 - u_1 \|_{L^2(\Omega)} \big) \\
+ |\zeta|^{1/2} e^{c\tau |\zeta|} \| \Lambda^2_{q_2} - \Lambda^2_{q_1} \|^{1/4}_\ast \| v_1 \|_{L^2(\Omega)} \Bigg].
\end{aligned} \label{es:BOUNDARYtermCARLE}
\end{equation}

Before proceeding with the proof of the claimed integral estimates, let us write down what the estimate, under the assumptions in (b), looks like at this stage: by \eqref{es:INTestimate}, \eqref{ter:boundaryDIRECTLY} and \eqref{es:BOUNDARYtermCARLE}, we obtain
\begin{equation}
\begin{aligned}
\bigg|\int_\Omega (q_1 &- q_2) u_1 u_2 \, dx\bigg| \lesssim \| u_1 - v_1 \|_{L^2(\Omega)} \| u_2 \|_{L^2(\Omega)} \\
 &+ \| w \|_{L^2(Q')} \| u_2 \|_{H^1(\Omega)} + \| \Lambda_{q_1}^2 - \Lambda_{q_2}^2 \|_\ast \| u_2 \|_{H^2 (\Omega)} \| v_1 \|_{L^2(\Omega)}\\
 & + \| e^{\tau x \cdot \zeta} u_2 \|_{H^1(\Omega)} \Bigg[ \frac{1}{\tau^{1/2}} \| e^{-\tau x \cdot \zeta} u_1 \|_{L^2(\Omega)} \\
&+ \frac{e^{c\tau|\zeta|}}{\tau^{1/2}} \big( \| w \|_{H^1(Q')} + \| v_1 - u_1 \|_{L^2(\Omega)} \big) \\
&+ |\zeta|^{1/2} e^{c\tau |\zeta|} \| \Lambda^2_{q_2} - \Lambda^2_{q_1} \|^{1/4}_\ast \| v_1 \|_{L^2(\Omega)} \Bigg]
\end{aligned} \label{es:preUC}
\end{equation}
for all $\tau \geq C (k^2 + M)$ and $\zeta \in \R^3$ with $\zeta \cdot e_3 \geq 1$. 

In the next step, we will control $w$ in $Q'$ by using quantified unique continuation from the boundary. This will be applied to \eqref{es:NGamma1} and \eqref{es:preUC} to obtain the estimates in (a) and (b), respectively. 

Proceed with the control of $w$ in $Q'$. We may assume $w$ not to vanish identically in $Q'$, otherwise we do not have anything to control. In order to estimate a non-identically-vanishing $w$, we will
apply an estimate due to Phung (see Th\'eor\`eme 1.1 in \cite{P}) which reads as follows in our particular case: Let $U$ be a smooth open subset of $\Omega$ containing $Q'$ with $U \cap Q = \emptyset$. Then, there exists a $d > 0$, which depends on $U$, $\Gamma$ and $k$, such that, if
\begin{equation}
\frac{\|w\|_{H^2(U)}}{\| \partial_\nu w \|_{L^2(\Gamma)}} \geq \frac{1}{d}, \label{ineq:a-prioriCOND}
\end{equation}
with $\Gamma := \{ x \in \Gamma^N_l : R + \epsilon < |x'| < R' - \epsilon \}$ for the $\epsilon$ already chosen, then
\begin{equation}
\| w \|_{H^1(U)} \lesssim \frac{\| w \|_{H^2(U)} } {\left[ \log \left(e \frac{ d \| w \|_{H^2(U)}}{\| \partial_\nu w \|_{L^2(\Gamma)}} \right) \right]^{1/2}}. \label{ineq:uc}
\end{equation}
Obviously, $\Gamma \subseteq \partial U \cap \Gamma_l$. Note that, by $w|_{\partial \Sigma} = 0$ and by unique continuation from the boundary, we can ensure that $\| \partial_\nu w \|_{L^2(\Gamma)} > 0$.

On the one hand, by the well-posedness of the problem satisfied by $w$, we know that
\[\| w \|_{H^2(\Omega)} \lesssim \| v_1 - u_1 \|_{L^2(\Omega)} + \| u_1 \|_{L^2(\Omega)}\]
with the implicit constant depending on $\Omega, M$ and $k$. On the other hand, considering another bump function $\chi'$ in $\R^2$ such that $\chi(x') = 1$ for $|x'| \leq R' - \epsilon$ and $\chi'(x') = 0$ for $|x'| > R' - \epsilon/2$, we have, by the same argument that we used to get \eqref{es:boundaryL2} with $\chi'$ instead of $\chi$, that
\begin{equation}
\begin{aligned}
\| \partial_\nu w\|_{L^2(\Gamma)} &\leq \| \chi' \partial_\nu w\|_{L^2(\Gamma_l)} \\
& \lesssim \| \Lambda^l_{q_2} - \Lambda^l_{q_1} \|^{1/4}_\ast \big(\| u_1 \|_{L^2(\Omega)}
+ \| v_1 - u_1 \|_{L^2(\Omega)} \big).
\end{aligned} \nn 
\end{equation}
Obviously, the implicit constants in the previous inequalities can be chosen to be the same.
Thus, since the function
\[ t \mapsto \frac{t}{(\log t)^{1/2}} \]
is increasing on $(e, \infty)$ and since the right-hand side of \eqref{ineq:uc} can be written as
\[ \frac{\| \partial_\nu w \|_{L^2(\Gamma)}}{e d} \frac{e \frac{ d \| w \|_{H^2(U)}}{\| \partial_\nu w \|_{L^2(\Gamma)}}} { \left[ \log \left(e \frac{ d \| w \|_{H^2(U)}}{\| \partial_\nu w \|_{L^2(\Gamma)}} \right) \right]^{1/2} }, \]
the last two inequalities can be combined with \eqref{ineq:a-prioriCOND} and
\eqref{ineq:uc} to deduce the following: if $\| \Lambda^l_{q_2} - \Lambda^l_{q_1} \|_\ast < d^4$, we have
\begin{equation}
\| w \|_{H^1(U)} \lesssim \frac{\| v_1 - u_1 \|_{L^2(\Omega)} + \| u_1 \|_{L^2(\Omega)} } {\left[ 1 + \bigg| \log \left( d^{-4} \| \Lambda_{q_2}^l - \Lambda_{q_1}^l \|_\ast \right) \bigg| \right]^{1/2}}. \label{es:locUC} 
\end{equation}
From now until the end of the proof, we shall write $\delta := d^{-4}$ and we shall assume $\| \Lambda^l_{q_2} - \Lambda^l_{q_1} \|_\ast < \delta^{-1}$ (so that we do not have to state this condition explicitly every time).

At this stage, the proofs of both parts of Proposition \ref{prop:integralEst} are almost complete. What remains for us to do is, firstly, to apply \eqref{es:locUC} to each inequality of \eqref{es:NGamma1}, \eqref{es:preUC} thus obtaining two new inequalities and, secondly, to apply the announced Runge-type approximation to the two new inequalities. We now go on to finish the proof of Proposition \ref{prop:integralEst}, whereby we shall omit all lengthy but straightforward calculations.

The Runge-type approximation can be stated as follows: For all $u_1$ as in the statement of Proposition \ref{prop:integralEst} and $\varepsilon > 0$, there exists a $v_1 \in H^2_\mathrm{loc} (\overline{\Sigma}) $ solving $ (-\Delta - k^2 + q_1) v_1 = 0 $ in $ \Sigma $ with $ \supp (v_1|_{\partial \Sigma}) \subseteq \overline{\Gamma^D_1}$ such that
\[\| v_1 - u_1 \|_{L^2(\Omega)} < \varepsilon.\]

With regard to part (a) of Proposition \ref{prop:integralEst}: by applying \eqref{es:locUC} to \eqref{es:NGamma1} and then by applying the approximation result to the resulting inequality, we obtain
\begin{equation}
\begin{aligned}
\bigg|\int_\Omega (q_1 - q_2) u_1 u_2 \, dx\bigg| \lesssim & \frac{  \| u_1 \|_{L^2(\Omega)} \| u_2 \|_{H^1(\Omega)} } {\left[ 1 + \bigg| \log \left(\delta \| \Lambda^1_{q_2} - \Lambda^1_{q_1} \|_\ast \right) \bigg| \right]^{1/2}} \\
&+   \| \Lambda_{q_1}^1 - \Lambda_{q_2}^1 \|_\ast \| u_1 \|_{L^2(\Omega)} \| u_2 \|_{H^2 (\Omega)}.
\end{aligned} \label{es:NGamma1'}
\end{equation}
With regard to part (b) of Proposition \ref{prop:integralEst}: we argue analogously by firstly applying \eqref{es:locUC} to \eqref{es:preUC} and secondly by applying the approximation result to obtain
\begin{equation}
\begin{aligned}
\bigg|&\int_\Omega (q_1 - q_2) u_1 u_2 \, dx\bigg| \lesssim \frac{  \| u_1 \|_{L^2(\Omega)} \| u_2 \|_{H^1(\Omega)} } {\left[ 1 + \bigg| \log \left(\delta \| \Lambda^2_{q_2} - \Lambda^2_{q_1} \|_\ast \right) \bigg| \right]^{1/2}}  \\
 &+ \| \Lambda_{q_1}^2 - \Lambda_{q_2}^2 \|_\ast \| u_1 \|_{L^2(\Omega)} \| u_2 \|_{H^2 (\Omega)} \\
 &+ \| e^{\tau x \cdot \zeta} u_2 \|_{H^1(\Omega)}  \Bigg( \frac{e^{c\tau|\zeta|}}{\tau^{1/2}} \frac{  \| u_1 \|_{L^2(\Omega)} } {\left[ 1 + \bigg| \log \left(\delta \| \Lambda^2_{q_2} - \Lambda^2_{q_1} \|_\ast \right) \bigg| \right]^{1/2}} \\
 &+ \frac{1}{\tau^{1/2}} \| e^{-\tau x \cdot \zeta} u_1 \|_{L^2(\Omega)} + |\zeta|^{1/2} e^{c\tau |\zeta|} \| \Lambda^2_{q_2} - \Lambda^2_{q_1} \|^{1/4}_\ast \| u_1 \|_{L^2(\Omega)} \Bigg).
\end{aligned} \label{es:NGamma2'}
\end{equation}
By dropping higher-order terms from the right-hand side of \eqref{es:NGamma1'} and \eqref{es:NGamma2'} (possibly at the cost of increasing the implicit constants in each of these inequalities), we arrive at the estimate claimed in (a) and (b).
%
\end{proof}

\section{Proof of Theorem \ref{th:diffDdNd}} \label{sec:theorem1}
In this section, we prove Theorem \ref{th:diffDdNd}. To achieve this task, we will construct appropriate CGOs, use those CGOs to construct the functions $u_1$ and $u_2$ appearing in the integral estimate of Proposition \ref{prop:integralEst} (b), and eventually obtain an upper bound on $(\widehat{q_1} - \widehat{q_2})(\xi)$ at each frequency $\xi$ from
\[\{ \xi = (\xi',\xi_3) \in \R^3: 1 \leq |\xi'| < r, |\xi_3| < r \}; \]
then, we will extend our control on $\widehat{q_1} - \widehat{q_2}$ to the ball
\[\{ \xi \in \R^3: |\xi| < r \}. \]
After this, we will carry out a classical argument due to Alessandrini (see \cite{A}) in order to obtain the stability estimate.

From now until the end of this section, we abuse notation by letting $q_j$ stand both for the potential from the statement of Theorem \ref{th:diffDdNd} (which is only defined on $\Sigma$) and for its trivial extension to all of $\R^3$. The meaning will be clear from the context; for example, $\widehat{q_j}$ refers to the Fourier transform of the trivial extension of $q_j$ to all of $\R^3$.

Start by stating the CGOs used to prove Theorem \ref{th:diffDdNd}. We perform the reflection argument originating from the work of Isakov in \cite{I}. Let $r > 2$, which will be specified later on in this section.

Let $\xi \in \R^3$ with 
\begin{equation}
1 \leq \xi_{1e} := \sqrt{\xi_1^2 + \xi_2^2} < r \quad \text{and} \quad |\xi_3|< r, \label{eq:restrictions on xi}
\end{equation}
be arbitrarily chosen. We define
\begin{align*}
e(1) &:= \frac{1}{\xi_{1e}}(\xi_1, \xi_2, 0), \\
e(3) &:= (0,0,1), \\
e(2) &:= e(3) \times e(1) = \frac{1}{\xi_{1e}}(-\xi_2, \xi_1, 0).
\end{align*}
We set $x^* := (x_1, x_2, -x_3)$ for any $x = (x_1, x_2, x_3)  \in \R^3$, $f^*(x):= f(x^*)$  for any function $f$, and $G^\ast = \{ x^\ast : x \in G \}$ for any domain $G$. The coordinates of any $x \in \R^3$ with respect to the orthonormal basis $\{ e(j) \}_{j=1}^3$ shall be denoted by $x = (x_{1e}, x_{2e}, x_{3e})_e$. Note $\xi = (\xi_{1e},0,\xi_3)_e$. We also write $\xi^\perp := (-\xi_3, 0, \xi_{1e})_e$.

As preparation for the reflection argument, we now fix a smooth bounded domain $B \subseteq \R^3$ such that
\[
\overline{\Omega \cup \Omega^*} \subseteq B, \quad B^* = B.
\]
Let $Q_1 \in L^\infty(B)$ be the even extension of $q_1$ about the coordinate variable $x_3$ and $Q_2 \in L^\infty(B)$ be the trivial extension of $q_2$ to all of $B$; explicitly, we define
\begin{eqnarray}
Q_1(x) &:=& q_1 (x) \chi_\Sigma(x) + q_1(x^*) \chi_{\Sigma^*}(x), \nn \\
Q_2(x) &:=& q_2 (x) \chi_\Sigma(x), \nn
\end{eqnarray}
for a.e. $x \in \R^3$, where $\chi_\Sigma$ and $\chi_{\Sigma^*}$ denote the characteristic functions of $\Sigma $ and $\Sigma^*$ respectively.

As in \cite{LU}, we introduce
\begin{eqnarray}
\rho_1 &:=& \left( -\tau \xi_3 + \frac{i}{2}\xi_{1e},i |\xi| (\tau^2-1/4)^{1/2}, \tau \xi_{1e} + \frac{i}{2} \xi_3 \right)_e \nn \\
&=& \tau \xi^\perp + i \left( \frac{1}{2} \xi + |\xi| (\tau^2 - 1/4)^{1/2} e(2) \right), \nn \\
\rho_2 &:=& \left( \tau \xi_3 + \frac{i}{2} \xi_{1e}, - i |\xi| (\tau^2 - 1/4)^{1/2}, -\tau \xi_{1e} + \frac{i}{2} \xi_3 \right)_e \nn \\
&=& - \tau \xi^\perp + i \left( \frac{1}{2} \xi - |\xi| (\tau^2 - 1/4)^{1/2} e(2) \right). \nn
\end{eqnarray}
One immediately computes that
\begin{equation}
\rho_m \cdot \rho_m = 0, \quad |\rho_m| = \sqrt{2}\tau |\xi|, \quad m = 1, 2. \label{eq:complex vectors}
\end{equation}
The $\rho_1$ and $\rho_2$ will be the candidates to construct the family of CGOs. It is a well-known fact that there exists a function $V_m \in H^2(B)$ solving
\begin{equation}
(-\Delta + Q_m - k^2) V_m = 0 \text{ in } B
\end{equation}
and having the form $V_m = e^{x \cdot \rho_m} (1+ \psi_m)$, where the remainder $\psi_m$ obeys
\begin{equation}
\| \psi_m \|_{H^k(B)} \lesssim \frac{1}{\tau^{1-k}}, \quad k=0,1,2, \label{ineq:decay conditions for remainders}
\end{equation}
for all $\tau \geq \tau_1:= \max(C_0 (M + k^2), 1)$, with $C_0 \geq 1$ depending on $B$. The implicit constant in \eqref{ineq:decay conditions for remainders} depends on $B$, $M$ and $k$.

Recall that Proposition \ref{prop:integralEst} (b) requires for $u_1$ to satisfy $u_1|_{\partial \Omega \cap \Gamma_2} = 0$; this boundary condition can be arranged to hold via Isakov's reflection argument from \cite{I}. Employing the same idea as in \cite{LU}, we set
\begin{eqnarray}
u_1(x)&:=& e^{x\cdot \rho_1}( 1+\psi_1(x)) - e^{x^* \cdot \rho_1} (1 + \psi_1^*(x)), \label{eq:def of u_1} \\
u_2(x)&:=& e^{x\cdot \rho_2}( 1+\psi_2(x)). \label{eq:def of u_2}
\end{eqnarray}
The construction of $\psi_1, \psi_2, u_1, u_2$ ensures that $u_1|_\Omega$ and $u_2|_{\Omega}$ satisfy the hypotheses of Proposition \ref{prop:integralEst} (b).

Let us compute that
\begin{align*}
\int_\Sigma (q_1 - q_2) u_1 u_2\, dx =& \int_\Sigma e^{i x \cdot \xi} (1+ \psi_1) (1+\psi_2) (q_1 - q_2)\, dx \\
&- \int_\Sigma (q_1 - q_2) e^{i x_{1e} \xi_{1e}} e^{-2\tau x_3 \xi_{1e}} ( 1 + \psi_1^*)(1+\psi_2)\, dx.
\end{align*}
As a consequence, we obtain
\begin{equation}
\begin{aligned}
&\left| \int_\Sigma e^{ix\cdot \xi} (q_1 - q_2)\, dx \right| \leq \left| \int_\Sigma (q_1 - q_2) u_1 u_2\, dx \right|  \\
&+ \left| \int_\Sigma e^{ix\cdot \xi} (q_1 -q_2) (\psi_1 + \psi_2 + \psi_1 \psi_2) \, dx \right|  \\
&+ \left| \int_\Sigma (q_1 -q_2) e^{i x_{1e} \xi_{1e}} e^{-2\tau x_3 \xi_{1e}} ( 1 + \psi_1^*)(1+\psi_2) \, dx \right|. 
\end{aligned} \label{eq:estimate after plugging in CGOs}
\end{equation}
By applying the triangle inequality, using that supp$(q_m) \subseteq Q $, and using $\| q_m \|_{L^\infty(\Sigma)} \lesssim 1$, we verify that
\begin{equation}
\left| \int_\Sigma (q_1-q_2)e^{ix_{1e} \xi_{1e}} e^{-2\tau x_3 \xi_{1e}} \, dx \right| \lesssim \frac{1}{ \tau}. \label{eq:a more precise version of Lemma lemma:bounding the last term in the PDF from 23 Septmeber 2014}
\end{equation}

Let us now apply \eqref{ineq:decay conditions for remainders} and \eqref{eq:a more precise version of Lemma lemma:bounding the last term in the PDF from 23 Septmeber 2014} to \eqref{eq:estimate after plugging in CGOs} to obtain
\begin{equation*}
\left| \int_\Sigma e^{ix\cdot \xi} (q_1 - q_2)\, dx \right| \lesssim \left| \int_\Sigma (q_1 - q_2) u_1 u_2\, dx \right| + \frac{1}{\tau}
\end{equation*}
for $\tau \geq \tau_1$, and $u_1, u_2$ defined by \eqref{eq:def of u_1} and \eqref{eq:def of u_2}. 
As noted earlier, the functions $u_1|_\Omega$ and $u_2|_{\Omega}$ satisfy the hypotheses of Proposition \ref{prop:integralEst}, so we may apply Proposition \ref{prop:integralEst} (b) with $ \zeta = \xi^\perp$ to deduce that, if $\| \Lambda^2_{q_1} - \Lambda^2_{q_2}\|_\ast < 1/\delta$, then
\begin{equation*}
\begin{aligned}
\bigg|\int_\Sigma (q_1 - q_2) e^{ix\cdot \xi} \, dx\bigg| \lesssim & \frac{1}{\tau} + e^{c \tau |\xi| } \frac{\| u_1 \|_{L^2(\Omega)} \| u_2 \|_{H^2 (\Omega)}} {\left[ 1 + \bigg| \log \left(\delta \| \Lambda^2_{q_2} - \Lambda^2_{q_1} \|_\ast \right) \bigg| \right]^{1/2}} \\
&+ \frac{1}{\tau^{1/2}} \| e^{\tau x \cdot \xi^\perp} u_2 \|_{H^1 (\Omega)} \| e^{-\tau x \cdot \xi^\perp} u_1 \|_{L^2(\Omega)}
\end{aligned}
\end{equation*}
for all $\tau \geq \max(\tau_0, \tau_1)$.

The choices of $\rho_m$ and $u_m$ can be combined with \eqref{ineq:decay conditions for remainders} to deduce that
\begin{equation*}
\bigg|\int_\Sigma (q_1 - q_2) e^{ix\cdot \xi} \, dx\bigg| \lesssim \frac{e^{c \tau |\xi| }} {\left[ 1 + \bigg| \log \left(\delta \| \Lambda^2_{q_2} - \Lambda^2_{q_1} \|_\ast \right) \bigg| \right]^{1/2}} + \frac{1}{\tau^{1/2}}
\end{equation*}
for all $\tau \geq \max(\tau_0, \tau_1)$, with $c > 4(2R + L)$. Thus, we obtain the uniform estimate
\begin{equation}
\big| \widehat{q_1}(\xi) - \widehat{q_2} (\xi) \big| \lesssim \frac{e^{c \tau r }} {\left[ 1 + \bigg| \log \left(\delta \| \Lambda^2_{q_2} - \Lambda^2_{q_1} \|_\ast \right) \bigg| \right]^{1/2}} + \frac{1}{\tau^{1/2}}. \label{eq:est for large freqs}
\end{equation}
for all $\tau \geq \max(\tau_0, \tau_1)$ and all $\xi \in \R^3$ with $1 \leq \xi_{1e} < r$, $|\xi_3| < r$.
Now, we are going to use analytic continuation in order to extend the set of frequencies, at which we control the difference $\widehat{q_1} - \widehat{q_2}$, to all of $\{ |\xi| < r \}$.

Let $\xi \in \R^3$ with $0<\xi_{1e} < 1$, $|\xi_3| < r$ be arbitrarily chosen; define $e(1), e(2), e(3)$ as we did earlier. By the Payley-Wiener theorem, $\widehat{q_1} - \widehat{q_2}$ is the restriction to $\R^3$ of an entire function on $\C^3$. Therefore, the function $f$ defined by
\begin{eqnarray}
f: \C &\rightarrow& \C \nn \\
	z &\mapsto& (\widehat{q_1} - \widehat{q_2})\left( (z, 0, \xi_3)_e \right) \nn
\end{eqnarray}
is entire. If we define
\begin{align*}
G := \{s + it \in \mathbb{C} : |s| < 2, |t| < 2 \}, \\
\gamma := \{ s + it \in \mathbb{C} : 0 < s < 1, t = 0 \},\\
\Gamma_0 := \{ s + it \in \mathbb{C} : 1 < s < 2, t = 0 \},
\end{align*}
then Corollary 1.2.2 (b) from \cite{Ibook} implies that there exist constants $C_0> 0$ and $\lambda \in (0,1)$, both of which depend on $\gamma$, such that
\[\sup_{\gamma} |f(s)| \leq C_0 (\sup_{G} |f (s +i t)|)^{1 - \lambda} (\sup_{\Gamma_0} |f (s)|)^\lambda. \]
Since
\[ \sup_G |f(s + it)| \lesssim 1,\]
and since $\sup_{\Gamma_0} |f (s)|$ can be bounded by means of \eqref{eq:est for large freqs}, we can conclude
\begin{equation}
\big| \widehat{q_1}(\xi) - \widehat{q_2} (\xi) \big| \lesssim \frac{e^{c \lambda \tau r }} {\left[ 1 + \bigg| \log \left(\delta \| \Lambda^2_{q_2} - \Lambda^2_{q_1} \|_\ast \right) \bigg| \right]^{\lambda/2}} + \frac{1}{\tau^{\lambda/2}} \label{eq:est for small freqs}
\end{equation}
for all $\tau \geq \max(\tau_0, \tau_1)$ and $\xi \in \R^3$ with $0<\xi_{1e} < 1$, $|\xi_3| < r$.

We go on to combine \eqref{eq:est for large freqs} and \eqref{eq:est for small freqs}, then drop higher-order terms (possibly at the cost of increasing the implicit constant), and thus conclude the following:
\begin{equation}
\big| \widehat{q_1}(\xi) - \widehat{q_2} (\xi) \big| \lesssim \frac{e^{c \tau r }} {\left[ 1 + \bigg| \log \left(\delta \| \Lambda^2_{q_2} - \Lambda^2_{q_1} \|_\ast \right) \bigg| \right]^{\lambda/2}} + \frac{1}{\tau^{\lambda/2}} \label{es:fourier<r red}
\end{equation}
for all $\tau \geq \max(\tau_0, \tau_1)$ and $\xi \in \R^3$ with $|\xi| < r$.

Next, we finish the proof of Theorem \ref{th:diffDdNd} by performing the classical argument due to Alessandrini \cite{A}. 
If we put $\varepsilon := \frac{s-\frac{3}{2}}{2}$ (so that $s = \frac{3}{2} + 2\varepsilon$), we may apply the Sobolev embedding theorem and interpolation together with the a-priori bounds on $q_1, q_2$ to obtain
\begin{equation}
\begin{aligned}
 \| q_1 - q_2 \|_{L^\infty(\Sigma)} &=  \| q_1 - q_2 \|_{L^\infty(\Omega)} \lesssim \| q_1 - q_2 \|_{H^{\frac{3}{2}+\varepsilon}(\Omega)} \\
	&\leq \| q_1 - q_2 \|_{H^{-1}(\Omega)}^{ \frac{\varepsilon}{s+1} } \| q_1 - q_2 \|_{H^s(\Omega)}^{ \frac{s-\varepsilon+1}{s+1}  } \\
	&\lesssim \| q_1 - q_2 \|_{H^{-1}(\Omega)}^{ \frac{\varepsilon}{s+1} } \leq \| q_1 - q_2 \|_{H^{-1}(\R^3)}^{ \frac{\varepsilon}{s+1} }.
\end{aligned} \label{ienq:fifth estimate}
\end{equation}
On the other hand, by using the definition of $\| \centerdot \|_{H^{-1}(\R^3)}$ in terms of the Fourier transform, then splitting the integral into high and low frequencies, and lastly using Plancharel's theorem, we get
\[\| q_1 - q_2 \|_{H^{-1}(\R^3)}^2 \lesssim r^3 \sup_{\{|\xi| < r\}} | \widehat{q_1}(\xi) - \widehat{q_2} (\xi) |^2 + r^{-2}. \]
Applying 
\eqref{es:fourier<r red} to the last estimate, utilizing $\tau \geq 1$, and for $c > 4(2R + L)+1$, we get
\[\| q_1 - q_2 \|_{H^{-1}(\R^3)} \lesssim \frac{e^{c \tau r }} {\left[ 1 + \bigg| \log \left(\delta \| \Lambda^2_{q_2} - \Lambda^2_{q_1} \|_\ast \right) \bigg| \right]^{\lambda/2}} + r^{3/2} \tau^{-\lambda/2} + r^{-1}. \]
Upon selecting $\tau$ so that $r^{-1} = r^{3/2} \tau^{-\lambda/2}$ or, equivalently, as $\tau:=r^{5/\lambda}$, the preceding estimate implies
\[\| q_1 - q_2 \|_{H^{-1}(\R^3)} \lesssim \frac{e^{c r^{\frac{\lambda + 5}{\lambda} } }} {\left[ 1 + \bigg| \log \left(\delta \| \Lambda^2_{q_2} - \Lambda^2_{q_1} \|_\ast \right) \bigg| \right]^{\lambda/2}} + r^{-1}. \]
Choose $r>0$ so that
\[
r^{\frac{\lambda+5}{\lambda}} = c^{-1} \log \left\{ \left[1 + \left| \log( \delta \| \Lambda_{q_2}^2 - \Lambda_{q_1}^2 \|_\ast ) \right| \right]^{\lambda/4}  \right\}
\]
in the last inequality and combine it with \eqref{ienq:fifth estimate};  in the resulting inequality, we drop higher-order terms (possibly at the cost of increasing the implicit constant), and thus derive the stability estimate of Theorem \ref{th:diffDdNd} with $\theta := \frac{\lambda}{2(\lambda+5)}$.

\section{Proof of Theorem \ref{th:sameDdNd}} \label{sec:theorem2}
In this section, we prove Theorem \ref{th:sameDdNd}. In doing so, we imitate the arguments from Section \ref{sec:theorem1}; broadly speaking, the main difference is that occurrences of $(\widehat{q_1} - \widehat{q_2})$ from Section \ref{sec:theorem1} will now be replaced by occurrences of $( Q_1^\text{even} - Q_2^\text{even})\widehat{\ }$, where $Q_j^\text{even}$ stands for the even extension of $q_j|_{\{ x_3 \geq 0 \}}$ to $\R^3$ about the coordinate variable $x_3$.

As in Section \ref{sec:theorem1}, we begin by constructing appropriate CGOs by means of Isakov's reflection argument from \cite{I}. Consider $r>2$, which will be specified later on in this section.

Let $\xi \in \R^3$ with
\[
1 \leq \xi_{1e} < r \quad \text{and} \quad |\xi_3| < r,
\]
be arbitrarily chosen. Define $e(1), e(2), e(3)$ as in Section $\ref{sec:theorem1}$.

From now until the end of this section, we let $Q_j^\text{even}$ stand for the even extension of $q_j$ about the coordinate variable $x_3$; explicitly, we set
{\[
Q_j^\text{even}(x) := q_j(x) + q_j(x^*) \qquad \text{for a.e. } x \in \R^3.
\]
Thanks to the regularity hypotheses on $q_j$, we have that $Q_1^\text{even}$ and $ Q_2^\text{even}$ belong to $H^s(\R^3)$ and have their supports contained in $Q \cup Q^*$.

Fix $B$ as in Section $\ref{sec:theorem1}$. Following the idea from Section 4 in \cite{LU}, we will construct $u_1$ and $u_2$ via Isakov's reflection argument. Firstly, we define
\begin{eqnarray}
\rho_1 &:=& \left( i \left( \frac{ \xi_{1e} }{2} - \alpha \xi_{3}\right) , - (\alpha^2+1/4)^{1/2} |\xi|,  i \left( \frac{ \xi_{3} }{2} + \alpha \xi_{1e}\right) \right)_e, \label{eq:first phase in second case}  \\
\rho_2 &:=& \left( i \left( \frac{ \xi_{1e} }{2} + \alpha \xi_{3}\right) , (\alpha^2+1/4)^{1/2} |\xi|,  i \left( \frac{ \xi_{3} }{2} - \alpha \xi_{1e}\right) \right)_e, \label{eq:second phase in second case}
\end{eqnarray}
where $\alpha > 0$ is a parameter. One readily verifies that
\[
\rho_m \cdot \rho_m = 0, \quad |\rho_m| = \sqrt{2}|\xi| (\alpha^2 + 1/4)^{1/2}, \quad m=1,2.
\]
It is a well-known fact that there exists a constant $C_0 = C_0(B, M, k) \geq 1$ such that, for each $\alpha \geq \alpha_2:= \max(C_0(M+k^2), 1)$, there exists a function $\psi_m \in H^2(B)$ satisfying
\begin{equation}
\| \psi_m \|_{H^k(B)} \lesssim \frac{1}{\big[ (\alpha^2 + 1/4)^{1/2} |\xi| \big]^{1-k}}, \quad k=0,1,2, \quad m =1,2 \label{eq:decay in second case}
\end{equation}
such that $V_m(x):= e^{x\cdot \rho_m}(1+ \psi_m)$ belongs to $H^2(B)$ and satisfies
\[
( -\Delta + Q_m^{ \text{even} } - k^2) V_m = 0 \text{ in } B.
\]
The implicit constant in \eqref{eq:decay in second case} depends on $B, M, k$. Regarding the existence of CGOs, in this context, we refer to \cite{BU}, \cite{DFKSU} and \cite{HUW}.

Employing the same idea as in \cite{LU}, we set
\begin{equation}
u_m(x) := e^{x \cdot \rho_m} (1+ \psi_m) - e^{x^* \cdot \rho_m} (1+\psi_m^*); \label{eq:explicit functions in second case}
\end{equation}
it then follows that $u_1|_\Omega$ and $u_2|_\Omega$ satisfy the hypotheses of Proposition \ref{prop:integralEst} (a). On the one hand, a routine computation utilizing the decay estimates from \eqref{eq:decay in second case} shows that
\begin{eqnarray}
\| u_1 \|_{L^2(\Omega)} &\lesssim& e^{ cr (\alpha^2+1/4)^{1/2} }, \label{eq:routineEst1 in second case} \\
\| u_2 \|_{H^2(\Omega)} &\lesssim& e^{ c r (\alpha^2+1/4)^{1/2} }, \label{eq:routineEst2 in second case}
\end{eqnarray}
with the implicit constants depending on $B, n, M, k$; at this stage, we have increased $c$ if necessary. On the other hand, a direct calculation shows that
\begin{equation}
\begin{aligned}
\int_\Sigma &(q_1 - q_2) u_1 u_2 \, dx =\\
& \int_\Sigma (q_1 - q_2) e^{i x \cdot \xi}\, dx + \int_\Sigma (q_1 - q_2) e^{ix \cdot \xi}(\psi_1 +\psi_2 +\psi_1 \psi_2)\, dx \\
&- \int_\Sigma (q_1 - q_2) e^{ix \cdot (\xi_{1e},0,2\alpha \xi_{1e})_e}\, dx\\
&- \int_\Sigma (q_1 - q_2) e^{i x\cdot (\xi_{1e},0,2\alpha \xi_{1e})_e} (\psi_1 +\psi_2^* +\psi_1 \psi_2^*)\, dx \\
&- \int_\Sigma (q_1 - q_2) e^{ix \cdot (\xi_{1e},0,-2\alpha \xi_{1e})_e}\, dx \\
& - \int_\Sigma (q_1 - q_2) e^{i x\cdot (\xi_{1e},0,-2\alpha \xi_{1e})_e} (\psi_1^* +\psi_2 +\psi_1^* \psi_2)\, dx \\
&+ \int_\Sigma (q_1 - q_2) e^{i x^* \cdot \xi}\, dx +  \int_\Sigma (q_1 - q_2) e^{i x^* \cdot \xi}(\psi_1^* +\psi_2^* +\psi_1^* \psi_2^*)\, dx;
\end{aligned} \nn
\end{equation}
combining this with the hypotheses on $q_1 - q_2$, \eqref{eq:decay in second case}, and $|\xi| \geq \xi_{1e} \geq 1$ establishes
\begin{equation}
\begin{aligned}
&\left| \int_\Sigma (q_1 - q_2) e^{i x \cdot \xi}\, dx + \int_\Sigma (q_1 - q_2) e^{i x^* \cdot \xi}\, dx\right| \lesssim \left| \int_\Sigma (q_1 - q_2) u_1 u_2 \, dx \right| \\
&+ \left| \left( q_1 - q_2 \right)^{\widehat{\ }} \big( (-\xi_{1e},0,-2\alpha \xi_{1e})_e \big) \right| + \left| \left( q_1 - q_2 \right)^{\widehat{\ }} \big( (-\xi_{1e},0,2\alpha \xi_{1e})_e \big) \right| \\
&+ \frac{1}{ (\alpha^2 + 1/4)^{1/2} },
\end{aligned} \nn
\end{equation}
where the implicit constant depends on $B, M, k$. For technical reasons, let us replace $\xi$ by $-\xi$. Now, we apply the quantified Riemann-Lebesgue lemma to $f := q_1 - q_2$ (in order to handle the Fourier transforms on the right-hand side in the last inequality), Proposition \ref{prop:integralEst} (a), \eqref{eq:routineEst1 in second case}, and \eqref{eq:routineEst2 in second case} to obtain
\begin{equation}
| (Q_1^\text{even} - Q_2^\text{even})\widehat{\ }(\xi) | \lesssim \frac{e^{cr(\alpha^2 + 1/4)^{1/2}}}{ \left[ 1 + | \log(\delta \| \Lambda_{q_1}^1 - \Lambda_{q_2}^1 \|_\ast ) | \right]^{1/2} } + \frac{1}{(\alpha^2 + 1/4)^{1/2}} \nn 
\end{equation}
whenever $\alpha \geq \alpha_2$. Here we have increased $c$.

For the sake of brevity and the ease of comparison with the arguments from the previous section, let us introduce a new parameter $\tau := (\alpha^2 + 1/4)^{1/2}$. Using this new parameter, we have obtained the following inequality: there exists a constant $T_2 := (\alpha_2^2 + 1/4)^{1/2}$ such that
\begin{equation}
| (Q_1^\text{even} - Q_2^\text{even})\widehat{\ }(\xi) | \lesssim \frac{e^{cr\tau}}{ \left[ 1 + | \log(\delta \| \Lambda_{q_1}^1 - \Lambda_{q_2}^1 \|_\ast ) | \right]^{1/2} } + \frac{1}{\tau} \label{eq:est for large freqs in second case} 
\end{equation}
for all $\tau \geq T_2$ and all $\xi \in \R^3$ with $1 \leq \xi_{1e} < r$, $|\xi_3| <r$.

Now, we are going to use analytic continuation in order to extend the set of frequencies, at which we control the difference $(Q_1^\text{even} - Q_2^\text{even})\widehat{\ }$, to all of $\{ |\xi| < r\}$. 

Let $\xi \in \R^3$ with $0<\xi_{1e} < 1$, $|\xi_3| < r$ be arbitrarily chosen; define $e(1), e(2), e(3)$ as we did earlier. By the Payley-Wiener theorem, $(Q_1^\text{even} - Q_2^\text{even})\widehat{\ }$ is the restriction to $\R^3$ of an entire function on $\C^3$. Therefore, the function $g$ defined by
\begin{eqnarray}
g: \C &\rightarrow& \C \nn \\
	z &\mapsto& (Q_1^\text{even} - Q_2^\text{even})\widehat{\ }\left( (z, 0, \xi_3)_e \right) \nn
\end{eqnarray}
is entire. If $G, \gamma, \Gamma_0$ stand for the same sets as in Section \ref{sec:theorem1}, then Corollary 1.2.2 (b) from \cite{Ibook} implies that there exist constants $C_0> 0$ and $\lambda \in (0,1)$, both of which depend on $\gamma$, such that
\[\sup_{\gamma} |f(s)| \leq C_0 (\sup_{G} |f (s +i t)|)^{1 - \lambda} (\sup_{\Gamma_0} |f (s)|)^\lambda. \]
Again, as in Section \ref{sec:theorem1}, we verify that $\sup_G |g(s)| \lesssim 1$ while $\sup_{\Gamma_0} |g (s)|$ can be bounded by means of \eqref{eq:est for large freqs in second case}, enabling us to conclude
\begin{equation}
\big| (Q_1^\text{even} - Q_2^\text{even})\widehat{\ }(\xi) \big| \lesssim \frac{e^{c \lambda \tau r }} {\left[ 1 + \bigg| \log \left(\delta \| \Lambda^1_{q_2} - \Lambda^1_{q_1} \|_\ast \right) \bigg| \right]^{\lambda/2}} + \frac{1}{\tau^{\lambda}} \label{eq:est for small freqs in second case}
\end{equation}
for all $\tau \geq T_2$ and $\xi \in \R^3$ with $0<\xi_{1e} < 1$, $|\xi_3| < r$.

We go on to combine \eqref{eq:est for large freqs in second case} and \eqref{eq:est for small freqs in second case}, then drop higher-order terms (possibly at the cost of increasing the implicit constant), and thus conclude the following:
\begin{equation}
\big| (Q_1^\text{even} - Q_2^\text{even})\widehat{\ }(\xi) \big| \lesssim \frac{e^{c \tau r }} {\left[ 1 + \bigg| \log \left(\delta \| \Lambda^1_{q_2} - \Lambda^1_{q_1} \|_\ast \right) \bigg| \right]^{\lambda/2}} + \frac{1}{\tau^{\lambda}} \label{es:fourier<r in second case}
\end{equation}
for all $\tau \geq T_2$ and $\xi \in \R^3$ with $|\xi| < r$.

Next, we finish the proof of Theorem \ref{th:sameDdNd} by performing the classical argument due to Alessandrini \cite{A}. 
If we put $\varepsilon := \frac{s-\frac{3}{2}}{2}$ (so that $s = \frac{3}{2} + 2\varepsilon$), we may apply the Sobolev embedding theorem and interpolation together with the a-priori bounds on $q_1, q_2$ to obtain
\begin{equation}
\begin{aligned}
 \| q_1 &- q_2 \|_{L^\infty(\Sigma)} =  \| Q_1^\text{even} - Q_2^\text{even} \|_{L^\infty(\Omega)}  \\
 	&\lesssim \| Q_1^\text{even} - Q_2^\text{even} \|_{H^{\frac{3}{2}+\varepsilon}(\Omega)} \\
	&\leq \| Q_1^\text{even} - Q_2^\text{even} \|_{H^{-1}(\Omega)}^{ \frac{\varepsilon}{s+1} } \| Q_1^\text{even} - Q_2^\text{even} \|_{H^s(\Omega)}^{ \frac{s-\varepsilon+1}{s+1}  } \\
	&\lesssim \| Q_1^\text{even} - Q_2^\text{even} \|_{H^{-1}(\Omega)}^{ \frac{\varepsilon}{s+1} } \leq \| Q_1^\text{even} - Q_2^\text{even} \|_{H^{-1}(\R^3)}^{ \frac{\varepsilon}{s+1} }.
\end{aligned} \label{ienq:fifth estimate in second case}
\end{equation}
Again, as in Section \ref{sec:theorem1}, by using the definition of $\| \centerdot \|_{H^{-1}(\R^3)}$ in terms of the Fourier transform, then splitting the integral into high and low frequencies, and lastly using Plancharel's theorem, we get
\[
\| Q_1^\text{even} - Q_2^\text{even} \|_{H^{-1}(\R^3)}^2 \lesssim r^3 \sup_{\{|\xi| < r\}} | (Q_1^\text{even} - Q_2^\text{even})\widehat{\ } (\xi) |^2 + r^{-2}.
\]
We proceed by imitating the argument from Section \ref{sec:theorem1}: apply \eqref{es:fourier<r in second case} to the last inequality, insert the resulting inequaliting into \eqref{ienq:fifth estimate in second case}, then select $\tau:=r^{5/(2\lambda)}$, next select $r$ such that
\[
r^{ \frac{2\lambda+5}{2\lambda} } = c^{-1} \log \left\{ \left[1 + \left| \log( \delta \| \Lambda_{q_2}^1 - \Lambda_{q_1}^1 \|_\ast ) \right| \right]^{\lambda/4}  \right\},
\]
drop higher-order terms (possibly at the cost of increasing the implicit constant), and thus derive the stability estimate of Theorem \ref{th:sameDdNd} with $\theta := \frac{\lambda}{2\lambda+5}$.

\begin{acknowledgements} \rm During the preparation of this paper P.C. was part of the University of Helsinki and was supported by the projects ERC-2010 Advanced Grant, 267700 - InvProb and Academy of Finland (Decision number 250215, the Centre of Excellence in Inverse Problems). K.M. is partly supported by the NSF. The authors would like to thank Gunther Uhlmann, who has vigorously facilitated this collaboration, and the organizers of the program on Inverse Problems held in Mittag-Leffler Institut in 2013, where this project was started.
\end{acknowledgements}

\end{document}